\documentclass[a4paper,leqno,11pt]{amsart}

\usepackage{amsfonts,amssymb,verbatim,amsmath,amsthm,latexsym,textcomp,amscd}
\usepackage{latexsym,amsfonts,amssymb,epsfig,verbatim}
\usepackage{amsmath,amsthm,amssymb,latexsym,graphics,textcomp}
\usepackage{graphicx}
\usepackage{color}
\usepackage{url}
\usepackage{enumerate}
\usepackage[mathscr]{euscript}
\usepackage{xcolor}

\input xy
\xyoption{all}

\setlength{\topmargin}{0mm}
\setlength{\textheight}{9.0in}
\setlength{\oddsidemargin}{.1in}
\setlength{\evensidemargin}{.1in}
\setlength{\textwidth}{7.0in}

\setlength{\textwidth}{6.0in}

\usepackage{hyperref}

\usepackage{a4}
\theoremstyle{plain}
\newtheorem{theorem}{Theorem}[section]
\newtheorem{thm}[theorem]{Theorem}
\newtheorem{lemma}[theorem]{Lemma}
\newtheorem{cor}[theorem]{Corollary}
\newtheorem{prop}[theorem]{Proposition}

\newtheorem{remark}[theorem]{Remark}

\theoremstyle{definition}
\newtheorem{defn}[theorem]{Definition}
\newtheorem{rmk}[theorem]{Remark}
\newtheorem{exam}[theorem]{Example}

\newcommand{\mM}{\underline{M}}
\newcommand{\uH}{\underline{H}}
\newcommand{\mN}{\underline{N}}
\newcommand{\uM}{\underline{M}}

\newcommand{\uA}{\underline{A}}

\newcommand{\Z}{\mathbb Z}

\newcommand{\bZp}{\langle \Z/p \rangle}

\newcommand{\bZq}{\langle \Z/q \rangle}

\newcommand{\CC}{\mathcal{C}}

\newcommand{\KK}{\mathcal{K}}

\newcommand{\uZ}{\underline{\mathbb{Z}}}

\author{Surojit Ghosh}

\email{surojitghosh89@gmail.com; gsurojit@campus.haifa.ac.il}
\address{Department of Mathematics,
University of Haifa,
Haifa-3498838, Israel.} 

\subjclass[2010]{Primary: 55N91, 55P91; Secondary: 57S17, 14M15.}
\keywords{Bredon cohomology, Mackey functor, Slice tower.}

\begin{document}
\title{Structure of $C_{pq}$-cohomology of points and slice tower}
\maketitle

\begin{abstract}
The additive structure of the $RO(C_{pq})$-graded Bredon cohomology $S^0$ with coefficients in the constant Mackey functor was computed in \cite{BG19}. Using that computation, the ring structure in the positive degrees has been computed here. Further, we calculate the slices of the spectrum $S^V \wedge H\uZ$ for any representation $V.$ 
\end{abstract}

\section{Introduction}
The $RO(G)$-graded Bredon cohomology of $S^0$ is hard to compute. The Mackey functors $\uH^\ast_G(S^0; \uA)$ were computed by Stong for the group $C_2$ and by Lewis and Stong for the groups $C_p$ (\cite{Lew88}). The multiplicative structure was also computed by Lewis in the same paper. With constant coefficients $\underline{\Z/p}$, the computations of the cohomology of $C_p$-orbits were performed by Stong (Appendix of \cite{Car00}). In \cite{BG19}, the authors carry the computations of $\uH^\ast_G(S^0; \uM)$ further to $G=C_{pq}$ where $p$ and $q$ are distinct odd primes, with coefficients in either Burnside ring Mackey functor or in constant Mackey functor. But they avoid computations of the ring structure. In this article, using the computations in \cite{BG19}, we compute the multiplicative structure of $\tilde{H}^{\bigstar}_{C_{pq}}(S^0)$ in the positive degree ($\bigstar= V -\ast)$ with coefficients in the constant Mackey functor. and obtain ({\it cf.} Theorem \ref{main1})

\begin{thm}
In positive degree, the ring $\tilde{H}^{\bigstar}_{C_{pq}}(S^0)$ is isomorphic to 
$$ \frac{\Z[a_\xi, a_{\xi^p}, a_{\xi^q}, u_\xi, u_{\xi^p}, u_{\xi^q}]}{(pqa_\xi, q a_{\xi^p }, pa_{\xi^q}, u_\xi a_{\xi^p}- p u_{\xi^p} a_{\xi}, u_\xi a_{\xi^q}-q u_{\xi^q} a_{\xi})}$$ where $a_V$ and $u_V$ are certain classes defined in \ref{au}.
\end{thm}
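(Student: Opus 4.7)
The proof proceeds in three stages: establish the five relations in $\tilde H^\bigstar_{C_{pq}}(S^0)$, construct the induced ring map from the quotient presentation, and check it is an isomorphism bi-grading-by-bi-grading against the additive data of \cite{BG19}.

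First, the torsion relations $pq\, a_\xi = 0$, $q\, a_{\xi^p} = 0$, $p\, a_{\xi^q} = 0$ follow from the standard identity $|G/K|\cdot a_V = 0$ whenever $V$ is a faithful $(G/K)$-representation inflated to $G$, proved via the cofiber sequence $S(V)_+ \to S^0 \to S^V$ and the transfer-restriction composition: $\xi$ is faithful on $C_{pq}$, while $\xi^p$ and $\xi^q$ are inflated from the quotients $C_q$ and $C_p$ respectively. For the gold relations, the $G$-equivariant $p$-th power map $\phi_p \colon S^\xi \to S^{\xi^p}$, defined by $z \mapsto z^p$ on the underlying $\mathbb{C}$, satisfies $\phi_p \circ a_\xi = a_{\xi^p}$ (both sides being inclusions of the fixed poles) and has non-equivariant degree $p$, so that $\phi_p^* u_{\xi^p} = p\, u_\xi$. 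Tracking this in cohomology produces $u_\xi a_{\xi^p} = p u_{\xi^p} a_\xi$; the companion relation comes from the $q$-th power map. These together yield a well-defined graded ring map $\Phi \colon R \to \tilde H^\bigstar_{C_{pq}}(S^0)$, where $R$ denotes the polynomial quotient ring on the right-hand side of the statement.

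For the isomorphism, the plan is to work bi-grading by bi-grading. In a positive degree $W - n$ with $W = i\xi + j\xi^p + k\xi^q$, every additive generator listed in \cite{BG19} can be expressed as a monomial $a_\xi^{i'} a_{\xi^p}^{j'} a_{\xi^q}^{k'} u_\xi^l u_{\xi^p}^m u_{\xi^q}^{n'}$ of the correct bi-degree, giving surjectivity of $\Phi$. For injectivity, one enumerates a normal-form basis of $R$ in the same bi-grading, choosing say a reduction that never lets $u_\xi$ coexist with $a_{\xi^p}$ or $a_{\xi^q}$, then reads off the torsion order of each monomial and compares with the additive rank and torsion from \cite{BG19}. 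Restriction to $C_p$ and $C_q$, where the rings are described by the classical computations of Lewis \cite{Lew88}, constrains any unexpected kernel element and serves as an independent cross-check.

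The technical heart lies in this last matchup. The gold relations allow rewriting $u_\xi a_{\xi^p}$ as $p u_{\xi^p} a_\xi$ and vice versa, so one must fix a normal form carefully to avoid double-counting. The most delicate bi-gradings are those in which multiple $a$-factors coexist: a monomial such as $a_\xi a_{\xi^p}$ has torsion order $\gcd(pq, q) = q$ rather than $pq$, while $a_{\xi^p} a_{\xi^q}$ is automatically zero since $\gcd(p,q) = 1$, which explains why it is not listed as an explicit relation. A case analysis over the support of the $a$-factors (one, two, or three of them positive), together with a count of the accompanying $u$-monomials, should in each case reproduce the ranks and torsions tabulated in \cite{BG19}, and a standard surjective-endomorphism argument on finitely generated abelian groups then closes the proof.
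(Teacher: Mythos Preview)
Your proposal follows essentially the same route as the paper: establish the torsion and gold relations (the paper's Proposition~3.7), build the map $\Phi$, and then verify bijectivity degree-by-degree by comparing a normal-form monomial basis of the quotient ring against the additive computation of Theorem~3.4 from \cite{BG19}. Your normal form---forbidding $u_\xi$ to coexist with $a_{\xi^p}$ or $a_{\xi^q}$---is exactly the reduction the paper uses to arrive at its three ``Types'' of monomials, and your observation that $a_{\xi^p}a_{\xi^q}=0$ is the same input that lets the paper discard mixed $a$-factors.

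Two small differences worth noting. First, your derivation of the gold relation is slightly more direct: you read it off from the equivariant $p$-th power map $S^\xi\to S^{\xi^p}$ and its underlying degree, whereas the paper routes the argument through the equivalence $\Sigma^{\xi^p}H\uZ\simeq\Sigma^\xi H(\KK_pL_p)$ of Lemma~3.6 and a Mackey-functor map $L_p\to R_p$ to produce the degree-one map $u_\xi/u_{\xi^p}$. Second, your injectivity sketch is in one respect more complete than what the paper writes down: the paper's case analysis of the three Types only explicitly treats the overlaps I--II and I--II--III, and asserts ``exactly one class'' per bidegree, but in the range $\max(n,l)\le a<n+l$ both Type~II and Type~III occur simultaneously and give distinct nonzero monomials of orders $q$ and $p$ respectively. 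Your plan of matching torsion orders summand-by-summand against the $\KK_p\bZp\oplus\KK_q\bZq$ entry of Theorem~3.4 handles this correctly.
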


Next, we study the Slice tower associated to certain $G$-spectra. The slice spectral sequence is a modern tool in computational equivariant homotopy theory. It is an analog of the motivic spectral sequence developed by Voevodsky \cite{Voe02}. This is a homotopy spectral sequence on a certain tower of fibration, called the slice tower.  The slice tower is an equivariant analog of the Postnikov tower from  classical stable homotopy theory. The slice tower for finite groups was first developed in \cite{HHR16} to solve the Kervaire invariant one problem. It is actually build on the work of Dugger for $C_2$-spectra \cite{Dug05}. This filtration has been reformulated in Ullman's work in \cite{Ull13}. In this article, we rely on the Ullman's new structure on slice filtration. Let $Sp^G$ be the category of genuine $G$-spectra for the finite group $G.$ The localizing subcategory of $Sp^G$ are those closed under weak equivalence, cofibration, extensions, and coproducts. An $n$-slice category is then the localizing subcategory of $Sp^G$ generated by the $G$-spectra of the form $\Sigma_G^{\infty}G/H_+ \wedge S^{k \rho_H},$ where $H$ is a subgroup of $G$ and $\rho_H$ is the regular representation of $H.$

For a spectrum $X$, we write $X \geq n$ if it is in the $n$-slice category. Associated to the $n$-slice category there is a natural localization functor $P^{n-1}.$ Note that the $(n+1)$-slice category sits insides the $n$-slice category. Therefore, there is a natural transformation $P^n \to P^{n-1}$, which yields the slice tower for each spectrum $X$ as follows:

$$\cdots \to P^{n+1}X \to P^nX \to P^{n-1}X \to \cdots$$ 
The homotopy fibre at each level $P^n X \to P^{n-1}X$ is called the $n$-slice for the spectrum $X$ and is denoted by $P^n_nX.$

 There is an equivariant analog of the Eilenberg MacLane spectrum $H \uM$ for each Mackey functor $\uM.$ This spectrum is always $0$-slice and hence has the trivial slice tower. Next, the natural question is how to determine the slices for any module over Eilenberg Maclane spectrum. The computation of the slice tower for $S^n \wedge H\uZ$ was made in \cite{Yar17} and for $S^{n\xi}\wedge H\uZ$ in \cite{HHR2} for the group $C_{p^n}.$ Recently, in \cite{BY18} the authors compute the
  $C_2 \times C_2$-slice tower for the spectrum $S^n \wedge H\underline{\Z}/2.$ 
  
  In this article, we investigate the slices of the spectrum $\Sigma^V  H\uZ$ for $C_{pq}$ where $V$ is any virtual representation of $C_{pq}$ and obtain( {\it cf.} Theorem \ref{main2})
  
\begin{thm}
Let $\alpha \in RO(C_{pq}),$ then $S^\alpha \wedge H\uZ$ has a $dim(\alpha)$-slice $S^\beta \wedge H\uZ$ for some $\beta \in RO(C_{pq}).$ The other slices are suspensions of $H \KK_p \bZp$ or $H\KK_q \bZq$ or wedges of them.
\end{thm}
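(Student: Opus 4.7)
The plan is to identify the $\dim(\alpha)$-slice explicitly and extract the remaining slices via cofibration-sequence arguments, proceeding by induction on $\alpha$ in the spirit of \cite{Yar17} and \cite{HHR2}.

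First, I would reduce to the case where $\alpha$ is an actual representation, since desuspension by a representation sphere shifts the entire slice tower predictably. Decompose $\alpha$ as a sum of irreducibles: every nontrivial irreducible real representation of $C_{pq}$ is two-dimensional and is classified, up to complex conjugation, by its isotropy subgroup, which is one of $\{e, C_p, C_q\}$; denote representatives by $\xi$, $\xi^p$, $\xi^q$ respectively. The proof then reduces to treating the summands $S^n$, $S^{b\xi^p}$, $S^{c\xi^q}$, $S^{a\xi}$ individually and combining.

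For $S^n\wedge H\uZ$, the slice tower is that of \cite{Yar17}. For $S^{b\xi^p}\wedge H\uZ$, the representation $\xi^p$ is inflated along the quotient $C_{pq}\to C_q$, so the spectrum is the inflation of a $C_q$-spectrum whose slice tower was computed in \cite{HHR2}; transporting this tower back along inflation converts the top $C_q$-slice into $H\uZ$ and lower $C_q$-slices into summands of the form $H\KK_q \bZq$. The case of $S^{c\xi^q}$ is symmetric. For the faithful summand $S^{a\xi}\wedge H\uZ$, I would use the isotropy cofibration
$$S(a\xi)_+ \wedge H\uZ \to H\uZ \to S^{a\xi}\wedge H\uZ,$$
and analyze $S(a\xi)_+\wedge H\uZ$ via its $C_{pq}$-CW structure, whose cells of orbit types $C_{pq}/C_p$ and $C_{pq}/C_q$ smash with $H\uZ$ to give induced summands contributing exactly $H\KK_p\bZp$ and $H\KK_q\bZq$ in the appropriate suspensions.

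To combine these pieces, given slice towers for $S^V\wedge H\uZ$ and $S^W\wedge H\uZ$ satisfying the theorem, the slice tower of $S^{V+W}\wedge H\uZ$ is controlled by the Leibniz filtration on the smash product. Its top slice is the smash of the tops, which remains of the form $S^{\beta_1+\beta_2}\wedge H\uZ$; the lower pieces are smashes of representation spheres with $H\KK_p\bZp$ or $H\KK_q\bZq$ summands, and these decompose by Frobenius reciprocity into wedges of suspensions of the same Mackey functor spectra, since $\KK_p\bZp$ and $\KK_q\bZq$ are supported on proper subgroup orbits. The main obstacle is controlling the mixed smash $H\KK_p\bZp\wedge H\KK_q\bZq$: one must verify that the slices of this product remain of the stated form and do not produce an unexpected Mackey functor supported at the full group $C_{pq}$. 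I expect this to be settled by the Mackey-functor-level additive computation of \cite{BG19} together with the ring relations from Theorem \ref{main1}, particularly $u_\xi a_{\xi^p} = p\, u_{\xi^p} a_\xi$ and $u_\xi a_{\xi^q} = q\, u_{\xi^q} a_\xi$, which will pin down the exact $\beta$ for which $S^\beta \wedge H\uZ$ realizes the top slice at each stage of the induction.
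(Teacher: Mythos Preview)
Your outline diverges from the paper's argument and contains two gaps that would block the proof as written.

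First, the analysis of $S^{a\xi}\wedge H\uZ$ via the isotropy cofibration is incorrect: since $\xi$ is the \emph{faithful} irreducible, the unit sphere $S(a\xi)$ carries a free $C_{pq}$-action, so any $G$-CW structure has cells of orbit type $C_{pq}/e$, not $C_{pq}/C_p$ or $C_{pq}/C_q$. Smashing such free cells with $H\uZ$ produces induced spectra $(C_{pq})_+\wedge S^k\wedge H\uZ$, which are not suspensions of $H\KK_p\bZp$ or $H\KK_q\bZq$; so this step does not yield the pieces the theorem promises.

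Second, the Leibniz combination is not justified. Smashing slice towers over $H\uZ$ need not return a slice tower: the slice filtration is multiplicative for smash over the sphere, and even there $\Sigma^W$ only shifts slices predictably when $W$ is a multiple of $\rho_G$ (this is exactly Proposition~\ref{regular}). You flag the mixed term $H\KK_p\bZp\wedge H\KK_q\bZq$ as the obstacle, but the more basic issue is that nothing guarantees the top smash $S^{\beta_1+\beta_2}\wedge H\uZ$ is itself a $\dim(\alpha)$-slice, and the ring relations of Theorem~\ref{main1} do not by themselves identify slices.

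The paper proceeds quite differently and avoids any smash-product bookkeeping. After the regular-representation reduction (which you also make), it works with the whole representation $V$ at once. The key structural input is Proposition~\ref{res}: because $S^V\wedge H\uZ$ has cohomological homotopy Mackey functors, being a $k$-slice for $C_{pq}$ is detected by the restrictions to $C_p$ and $C_q$. One then invokes the known $C_p$- and $C_q$-slice computations (Lemma~\ref{cpslice}, Remark~\ref{rmkyar}) to find, for each prime separately, how many copies of $\xi$ must be traded for $\xi^p$ (resp.\ $\xi^q$) so that the restricted spectrum becomes a $\dim(V)$-slice. These trades are implemented by the maps $u_{l(\xi-\xi^p)}$ and $u_{l(\xi-\xi^q)}$ of Lemma~\ref{HK}, and Proposition~\ref{cofiber} shows their cofibers are exactly wedges of suspensions of $H\KK_p\bZp$ (resp.\ $H\KK_q\bZq$); Lemma~\ref{Kpslice} pins down the slice dimension of each such suspension. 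The four sign cases in Proposition~\ref{slice} record whether one must map into or out of $S^V\wedge H\uZ$ at each prime. This gives $\beta$ directly from the $C_p$- and $C_q$-slice data, with no inductive combination step required.
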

  
\subsection{Organization.} In section 2, we recall some definitions and results from $RO(G)$-graded Bredon cohomology theory. We use these  to write the multiplicative structure of the cohomology of $S^0$ in positive degree in section $3$. In section $4$, we describe the slice tower for certain spectrum $S^V \wedge H\uZ.$

\section{preliminaries}

We recall certain basic ideas and techniques in Bredon cohomology, and fix the notations used throughout the paper. Details and proofs may be found in \cite{May96}. The notation $G$ will be used for the cyclic group $C_{pq}$ of order $pq$ where $p < q$ are two distinct primes, though most of the facts in this section also hold for a general finite group.  

For a unitary representation $V$ of the group $G,$ let $S^V$ be the one point compactification of $V,$ with the action induced from that of $V$.
 
 Equivariant homotopy and cohomology theories are more naturally graded by $RO(G),$ the Grothendieck group of finite real orthogonal representations of $G.$ To obtain this kind of theory one needs more structure on the coefficients. These are called Mackey functors.

\begin{defn}
A Mackey functor consists of a pair $\uM = (\uM_\ast , \uM^\ast )$ of functors from the category of finite $G$-sets to $\mathcal{A} b$, with $\uM_\ast$ covariant and $\uM^\ast$ contravariant. On every object $S$, $\uM^\ast$ and $\uM_\ast$ have the same value which we denote by $\uM(S)$,  and $\uM$ carries disjoint unions to direct sums. The functors are required to satisfy that for every pullback diagram of finite $G$-sets as below 
$$\xymatrix{ P \ar[r]^\delta    \ar[d]^\gamma                             & X \ar[d]^\alpha \\ 
                        Y \ar[r]^\beta                                                     &  Z,}$$
one has $\uM^\ast(\alpha) \circ \uM_\ast(\beta) = \uM_\ast(\delta) \circ \uM^\ast(\gamma).$ 
\end{defn}

\begin{exam}For an Abelian group $C,$ an easy example for a Mackey functor is the constant Mackey functor $\underline{C}$ defined by the assignment $\underline{C}(S) = Map^G(S, C),$ the set of $G$-maps from the $G$-orbit $S$ to $C$ with trivial $G$-action.
\end{exam}

Equivariant cohomology theories are represented by $G$-spectra. Naive $G$-spectra are those in which only desuspension with respect to trivial $G$-spheres is allowed. Usually, what we mean by $G$-spectra are those in which desuspension with respect to all representation spheres are allowed. In the viewpoint of \cite{LMS86}, naive $G$-spectra are indexed over a trivial $G$-universe and $G$-spectra are indexed over a complete $G$-universe. As we are allowed to take desuspension with respect to representation-spheres, the associated cohomology theories become $RO(G)$-graded. 

We consider orthogonal $G$-spectra with positive complete model structure to model the equivariant stable homotopy theory, which can be read off from \cite[Appendix A, B]{HHR16}. In particular, we use $(Sp^G, \wedge, \mathbb{S}^0)$ for the symmetric model category of orthogonal $G$-spectra. 

Every $G$-set $S$ yields a suspension spectrum $\Sigma^\infty_G S_+$ in the category of $G$-spectra. It turns out that the category of finite $G$-sets, homotopy classes of spectrum maps as morphisms, is naturally isomorphic to the Burnside category. Thus, the homotopy groups of $G$-spectra are naturally Mackey functors. For an orthogonal spectrum $X,$  denote by $\pi_{\bigstar}(X)$ its $RO(G)$-graded homotopy groups. In particular, for $\alpha = V -W \in RO(G),$ 
$$\pi_\alpha(X) = [S^V, S^W \wedge X]^G.$$

In non-equivariant homotopy theory, for each Abelian group $A$ there is an Eilenberg-MacLane spectra $HA$ satisfying  $$\pi_n(HA) = \begin{cases} A, & \text{if }n =0\\ 
0, & \text{otherwise.}
 \end{cases}$$
 
In the category of orthogonal spectra, we also have Eilenberg-MacLane spectra for each Mackey functor.

\begin{prop}
Let $\uM$ be a $G$-Mackey functor. Then there exist an equivariant Eilenberg-MacLane spectrum $H\uM$, unique up to homotopy in $Sp^G.$ 
\end{prop}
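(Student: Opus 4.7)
The plan is to construct $H\uM$ by a Postnikov-style cellular procedure in orthogonal $G$-spectra, paralleling the classical construction of non-equivariant Eilenberg--MacLane spectra. First I would realize $\uM$ at the level of $\pi_0$: choose a surjection $\bigoplus_\alpha \uA_{H_\alpha} \twoheadrightarrow \uM$ from a sum of representable (Burnside) Mackey functors, which exists by the free-forgetful adjunction between Mackey functors and $G$-sets. Since the Burnside category identifies $\pi_0 \Sigma_G^\infty (G/H)_+ \cong \uA_H$ as Mackey functors, the wedge $X_0 = \bigvee_\alpha \Sigma_G^\infty (G/H_\alpha)_+$ realizes this direct sum on $\pi_0$ and has vanishing homotopy in negative degrees. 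Next I would inductively kill unwanted homotopy: for $n=0$, pick generators of the kernel of the chosen surjection, represent each by a map out of a suspension spectrum of an orbit, and form the cofiber to obtain $X_1$ with $\pi_0 X_1 = \uM$; for $n \ge 1$, do the same with generators of $\pi_n X_n$ and cells of the form $\Sigma^n \Sigma_G^\infty (G/K)_+$. The colimit $H\uM := \operatorname{hocolim}_n X_n$ then satisfies $\pi_0 H\uM = \uM$ and $\pi_n H\uM = 0$ for $n \neq 0$.

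For uniqueness, if $H\uM$ and $H\uM'$ are two such spectra, I would produce a map $H\uM \to H\uM'$ inducing the identity on $\pi_0$ by lifting cell-by-cell along the cellular filtration of $H\uM$. The obstruction to extending from the $n$-skeleton to the $(n+1)$-skeleton lies in an $RO(G)$-graded cohomology group of an attaching sphere with coefficients in $\pi_n H\uM'$, which vanishes because $H\uM'$ is concentrated in degree zero. The resulting map is a $\pi_\bigstar$-isomorphism, hence a weak equivalence by the equivariant Whitehead theorem, and the same obstruction argument shows that any two such realizations are homotopic.

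The main subtlety is establishing that $\pi_0^H \Sigma_G^\infty (G/K)_+ \cong \mathcal{A}(G/H, G/K)$ with the expected restriction and transfer structure, so that wedges of orbit suspension spectra genuinely present arbitrary Mackey functors on $\pi_0$. This identification is the Segal--tom Dieck description of stable equivariant zero-stems; once it is in hand, the construction and uniqueness reduce to routine obstruction-theoretic bookkeeping, exactly as in the non-equivariant case.
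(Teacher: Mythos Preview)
Your sketch is correct and is the standard cellular construction of equivariant Eilenberg--MacLane spectra. The paper, however, does not actually prove this statement: its entire proof is a citation to \cite[Theorem 5.3]{GM95a}. So there is no argument in the paper to compare against; you have supplied a self-contained proof where the paper only defers to the literature, and your construction is essentially the one found in such references.

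Two small points of precision. First, in the uniqueness argument the obstructions to extending over a cell $\Sigma^n \Sigma_G^\infty(G/K)_+$ live in the integer-graded group $\pi_n^K H\uM' = [\Sigma^n \Sigma_G^\infty(G/K)_+, H\uM']^G$, which vanishes for $n\neq 0$ by hypothesis; there is no need to invoke $RO(G)$-graded cohomology, and doing so slightly obscures the point. Second, when you attach $1$-cells to kill the kernel on $\pi_0$, the cofiber sequence may introduce new classes in $\pi_1$; your inductive ``kill $\pi_n$ for $n\geq 1$'' step handles this, but it is worth stating explicitly that $\pi_0 X_1 \cong \uM$ follows from the long exact sequence together with connectivity of the attached cells. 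With those clarifications, the argument goes through without difficulty.
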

\begin{proof}
See \cite[Theorem 5.3]{GM95a}.
\end{proof} 

Thus, equivariant Eilenberg-MacLane spectra arise from Mackey functors. This is a theorem of Lewis, May, and McClure in Chapter XIII of \cite{May96}.  Therefore,  integer-graded cohomology associated to a coefficient system extends to $RO(G)$-graded cohomology theory if and only if the coefficient system has an underlying Mackey functor structure.

\begin{defn}
 A $RO(G)$-graded cohomology theory consists, for each $\alpha \in RO(G),$ of functors $E^\alpha$ from reduced equivariant CW complexes to Abelian groups which satisfy the usual axioms - homotopy invariance, excision, long exact sequence and the wedge axiom. 
\end{defn}
It is interesting to note that the suspension isomorphism for $RO(G)$-graded cohomology theories takes the form 
$E^{\alpha}(X) \cong E^{\alpha + V}(S^V \wedge X)$ for every based $G$-space $X$ and representation $V$.

We recall that there are change of groups functors on equivariant spectra. The restriction functor from $G$-spectra to $H$-spectra has a left adjoint given by smashing with $(G/H)_+$. This also induces an isomorphism for cohomology with Mackey functor coefficients 
$$\tilde{H}^\alpha_G((G/H)_+\wedge X ; \uM)\cong \tilde{H}^\alpha_H(X; res_H(\uM))$$

The $RO(G)$-graded theories may also be assumed to be Mackey functor-valued as in the definition below.   
\begin{defn}
Let $X$ be a pointed $G$-space, $\uM$ be any Mackey functor, $\alpha \in RO(G)$. Then the Mackey functor valued cohomology $\uH^{\alpha}_{G}(X;\uM)$ is defined: 
$$\uH^{\alpha}_{G}(X;\uM)(G/K) = \tilde{H}^{\alpha}_{G}({(G/K)}_+ \wedge X;\uM).$$
The restriction and transfer maps are induced by the appropriate maps of $G$-spectra. 
\end{defn}

\section{The ring structure}
One natural question is: {\it What is the structure on a Mackey functor which induces a ring structure on the cohomology of spaces?} There is a box product $\Box$ on the category of Mackey functors. For two Mackey functors $\mM, \mN$, this is obtained by taking the left Kan extension along 
$$\xymatrix{ Burn_G \times Burn_G \ar[r] \ar[d] & Ab \\ 
 Burn_G \ar@{-->}[ru]}$$
The right arrow in the top row is given by $(S,T)\mapsto \mM(S)\times \mN(T)$. The left vertical arrow is given by $(S,T) \mapsto S\sqcup T$. Mackey functors inducing ring valued cohomology theories are monoids under the box product $\Box$. The constant Mackey functor $\uZ$ is a monoid under $\Box$. Therefore $\tilde{H}^\bigstar_G(S^0;\uZ)$ has a graded ring structure. In this section, we try to understand the ring structure for the group $C_{pq}.$

We start by recalling some $C_p$-Mackey functors from \cite{Lew88}. In the diagrams below the downward arrows are restrictions and the upward arrows are the transfers.

$$\xymatrix{   & \Z  \ar@/_.5pc/[dd]_{p} &&  & \Z/p \ar@/_.5pc/[dd]  &&  & \Z  \ar@/_.5pc/[dd]_{1}\\ 
L_p :                                                     &&& \langle \Z/p \rangle_p :  &&& R_p:     \\
 & \Z \ar@/_.5pc/[uu]_{1}                    &&   & 0 \ar@/_.5pc/[uu]    &&   & \Z \ar@/_.5pc/[uu] _{p} }$$

Now observe that the Burnside category  $Burn_{C_{pq}}$ is isomorphic to  $Burn_{C_{p}}\otimes Burn_{C_q}$ formed as the product set of objects and tensor product set of morphisms. Thus we may define a $C_{pq}$-Mackey functor by tensoring Mackey functors on $C_p$ and $C_q$. The following Mackey functors have special importance in our case.

\begin{defn}
For a $C_p$-Mackey functor $\uM$, define $C_{pq}$-Mackey functors

$$\CC_p\uM: = \uM \otimes L_q, \,  \KK_p\uM : = \uM \otimes R_q.$$
\end{defn}
Also we denote  
$$ R_{pq} := R_p \otimes R_q, \, L_{pq} := L_p \otimes L_q. $$

Note that the Mackey functor $R_{pq}$ is the constant Mackey functor for the group $C_{pq}.$ Therefore, depending on the context, sometimes we use $\uZ$ instead of $R_{pq}.$

We call a Mackey functor cohomological if it is  a module over $\uZ.$ In other words, a Mackey functor $\uM$ is cohomological if the composite $tr^H_K res^H_K$ is simply multiplication by $|H|/|K|$ for all $K\leq H \leq G.$ A cohomological Mackey functor for the group $C_{pq}$ satisfies the following:  

\begin{prop}\label{cohmac}
Let $\uM$ be a $C_{pq}$-cohomological Mackey functor such that both the groups $\uM(C_{pq}/C_p)$ and $\uM(C_{pq}/C_q)$ vanish. Then, the Mackey functor $\uM$ is trivial.
\end{prop}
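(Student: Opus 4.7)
The plan is to prove $\uM(G/G) = 0$ and $\uM(G/1) = 0$ in turn; combined with the hypothesis that $\uM(G/C_p)$ and $\uM(G/C_q)$ vanish, this gives the full triviality. The first vanishing comes directly from the cohomological identity at the top orbit, while the second requires Mackey's double coset formula together with the $\uZ$-module structure.

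For $\uM(G/G) = 0$, I would apply the cohomological identity $tr^{G}_{C_p}\circ res^{G}_{C_p}=q\cdot\mathrm{id}$ on $\uM(G/G)$. Since $res^{G}_{C_p}$ factors through $\uM(G/C_p)=0$, the composite vanishes, so $q\cdot\uM(G/G)=0$. The symmetric argument with $C_q$ yields $p\cdot\uM(G/G)=0$, and Bezout's identity (using $\gcd(p,q)=1$) concludes.

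For $\uM(G/1)=0$, observe that every transfer and restriction between $\uM(G/1)$ and a higher orbit factors through one of the vanishing groups $\uM(G/C_p)$ or $\uM(G/C_q)$, and so is zero. Applying Mackey's double coset formula to the pullback $G/1\times_{G/C_p}G/1 \cong \bigsqcup_{c\in C_p}G/1$ yields
\[
res^{C_p}_{1}\circ tr^{C_p}_{1}\;=\;\sum_{c\in C_p}c_{*}\;=\;N_{C_p}
\]
as an endomorphism of $\uM(G/1)$, where $c_{*}$ denotes the Weyl $G$-action. Since the left-hand side is zero, $N_{C_p}$ annihilates $\uM(G/1)$; analogously, $N_{C_q}$ does.

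The principal obstacle is converting the vanishing of these two norms into the vanishing of $\uM(G/1)$ itself: a priori the Weyl action could be nontrivial, and the norm identities alone do not suffice. The plan is to pin down the Weyl action using the $\uZ$-module structure — specifically, by applying Frobenius reciprocity to the unit $1\in\uZ(G/G)$ and its image under $res^{G}_{1}$, and combining with the already-established $\uM(G/G)=0$ to force the Weyl action on $\uM(G/1)$ to be trivial. Once trivial, $N_{C_p}$ reduces to multiplication by $p$ and $N_{C_q}$ to multiplication by $q$, and a final application of Bezout completes the proof.
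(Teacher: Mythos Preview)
Your argument for $\uM(G/G)=0$ is correct and matches the paper's proof exactly: the paper also uses $tr^G_{C_p}res^G_{C_p}=q\cdot\mathrm{id}$ and $tr^G_{C_q}res^G_{C_q}=p\cdot\mathrm{id}$ together with $\gcd(p,q)=1$. In fact this is \emph{all} the paper proves; its proof stops after establishing $\uM(G/G)=0$ and simply asserts ``the result follows,'' never addressing $\uM(G/e)$.

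Your attempt to go further and show $\uM(G/e)=0$ contains a genuine gap. The step ``use the $\uZ$-module structure and Frobenius reciprocity, together with $\uM(G/G)=0$, to force the Weyl action on $\uM(G/e)$ to be trivial'' cannot be carried out, because the desired conclusion is simply false: cohomological Mackey functors can carry a nontrivial Weyl action at the free orbit. Concretely, let $N=\Z[\zeta_{pq}]$ with a generator of $C_{pq}$ acting by multiplication by $\zeta_{pq}$, and take $\uM$ to be the associated fixed-point Mackey functor $\uM(G/H)=N^{H}$. This is a $\uZ$-module (fixed-point Mackey functors are always cohomological), and since $\zeta_p-1$ and $\zeta_q-1$ are non-zero-divisors in the integral domain $N$ one has $N^{C_p}=N^{C_q}=N^{G}=0$; yet $\uM(G/e)=N\neq 0$, with a genuinely nontrivial $G$-action. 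Thus the proposition, read literally as asserting $\uM=0$, is false, and no argument for $\uM(G/e)=0$ from these hypotheses can succeed. Frobenius reciprocity applied to $1\in\uZ(G/G)$ only tells you that $tr^G_e(m)=0$, which you already knew.

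What the paper actually uses downstream is only the vanishing at the top orbit $G/G$; in the applications (e.g.\ Proposition~\ref{cofiber}) the value $\uM(G/e)$ is already zero for independent reasons, because the restriction of the spectrum to one of the two prime-order subgroups is contractible. Your first paragraph therefore already supplies everything the paper's argument establishes and everything it needs.
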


\begin{proof}
Since $\uM$ is cohomological, for $K \leq H \leq G,$ the composition 

$$tr^H_K res^H_K: \uM(G/H) \to \uM(G/H)$$ is given by the multiplication by the index $|H|/|K|.$ Now pick an element $x \in \uM(C_{pq}/C_{pq})$; applying above map we get 

$$px=0 \text{ and } qx=0.$$
Since, $p$ and $q$ are relatively prime, $x=0.$ The result follows.
\end{proof} 

 With the above notations, we recall from \cite{BG19} the additive structure of the Mackey functor $\uH^\alpha_G(S^0;R_{pq})$ as follows:
\begin{thm}\label{compr}Let $\alpha \in RO(C_{pq}).$ Then the Mackey functor
$$\uH^\alpha_G(S^0;R_{pq}) \cong \begin{cases}
\KK_p\bZp  & \mbox{if}~|\alpha|<0, |\alpha^{C_p}|>1, |\alpha^{C_q}|\leq 1~\mbox{odd} \\ 
\KK_q\bZq  & \mbox{if}~|\alpha|<0, |\alpha^{C_p}|\leq 1, |\alpha^{C_q}|> 1~\mbox{odd} \\
\KK_p\bZp \oplus \KK_q\bZq & \mbox{if}~|\alpha|<0, |\alpha^{C_p}|>1, |\alpha^{C_q}|>1~\mbox{odd} \\
   \KK_p\bZp \oplus \KK_q\bZq & \mbox{if}~|\alpha|>0, |\alpha^{C_p}|\leq 0, |\alpha^{C_q}|\leq 0~ \mbox{even} \\
\KK_p\bZp  & \mbox{if}~|\alpha|>0, |\alpha^{C_p}|\leq 0, |\alpha^{C_q}|> 0~ \mbox{even} \\
\KK_q\bZq  & \mbox{if}~|\alpha|>0, |\alpha^{C_p}|> 0, |\alpha^{C_q}|\leq 0~ \mbox{even} \\
R_{pq}  & \mbox{if}~|\alpha|=0, |\alpha^{C_p}|\leq 0, |\alpha^{C_q}|\leq 0 \\
L_{pq}  & \mbox{if}~|\alpha|=0, |\alpha^{C_p}|> 0, |\alpha^{C_q}|> 0 \\
\KK_pL_p  & \mbox{if}~|\alpha|=0, |\alpha^{C_p}|> 0, |\alpha^{C_q}|\leq 0 \\
\KK_qL_q  & \mbox{if}~|\alpha|=0, |\alpha^{C_p}|\leq 0, |\alpha^{C_q}|> 0 \\
0  &\mbox{otherwise}. 
\end{cases} $$
\end{thm}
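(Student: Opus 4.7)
The plan is to proceed by induction on the representation $\alpha$, propagating from integer grading via the Euler class cofiber sequences. Any $\alpha \in RO(C_{pq})$ can be written uniquely as $\alpha = a + b\xi + c\xi^p + d\xi^q$ with $a,b,c,d \in \Z$, where $\xi$ is a faithful two-dimensional real representation and $\xi^p, \xi^q$ are the pullbacks of faithful representations of $C_q$ and $C_p$ respectively. Under this parameterization, $|\alpha| = a+2b+2c+2d$, $|\alpha^{C_p}| = a+2c$, and $|\alpha^{C_q}| = a+2d$, so the case distinctions in the theorem correspond to sign and parity combinations of these three quantities.

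For the base case $\alpha \in \Z$, the result is immediate from the definition of the Eilenberg-MacLane spectrum: $\uH^n_G(S^0; R_{pq})$ equals $R_{pq}$ for $n = 0$ and is trivial otherwise. For the inductive step, smashing the cofiber sequence $S(V)_+ \to S^0 \xrightarrow{a_V} S^V$ with $S^{\alpha-V}$ (for $V \in \{\xi, \xi^p, \xi^q\}$) and applying $\uH^\ast_G(-; R_{pq})$ produces the long exact sequence
\[ \cdots \to \uH^{\alpha-V}_G(S^0) \xrightarrow{\cdot a_V} \uH^\alpha_G(S^0) \to \uH^\alpha_G(S(V)_+) \to \uH^{\alpha-V+1}_G(S^0) \to \cdots. \]
Iterating over the three choices of $V$ moves one from an integer grading to any desired $\alpha$, provided the error term $\uH^\alpha_G(S(V)_+)$ is understood.

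These error terms are accessible because of favorable freeness properties of $S(V)$: the sphere $S(\xi)$ is entirely $G$-free, reducing its Bredon cohomology to ordinary cohomology of the circle $S(\xi)/G$; the sphere $S(\xi^p)$ has trivial $C_p$-action and free $C_q$-action, so its Bredon cohomology is determined by the Lewis-Stong $C_q$-computation \cite{Lew88} induced up to $G$, and the resulting Mackey functors fall in the $\KK_p$-family; likewise $S(\xi^q)$ produces $\KK_q$-type answers. Plugging these into the long exact sequence and analyzing the image and kernel of $\cdot a_V$ produces the stated Mackey functors case by case. Proposition \ref{cohmac} is used repeatedly to identify the cohomological Mackey functors that appear, since many of them (such as $\KK_p\bZp$) are characterized up to isomorphism by their values at $G/C_p$ and $G/C_q$ alone.

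The principal obstacle is the identification of the Mackey-functor structure, including the resolution of any extension problems. The long exact sequence only returns abelian groups at each level $G/H$, whereas the desired output is a Mackey functor with precisely specified restrictions and transfers; this requires careful tracking of how Euler class multiplication interacts with restriction and transfer. In cases where the connecting map in the sequence has both nontrivial image and kernel, one must verify that the resulting short exact sequence of Mackey functors splits. The ensuing case analysis, matching each sign-and-parity combination of $|\alpha|, |\alpha^{C_p}|, |\alpha^{C_q}|$ to the appropriate branch of the formula, is routine but lengthy.
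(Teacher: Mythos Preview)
The paper does not prove Theorem~\ref{compr} at all: it is explicitly recalled from \cite{BG19} (``we recall from \cite{BG19} the additive structure\ldots''), so there is no in-paper argument to compare against. Your outline is the standard one and is essentially the strategy carried out in \cite{BG19}: reduce to integer grading and propagate using the cofiber sequences $S(V)_+\to S^0\to S^V$ for $V\in\{\xi,\xi^p,\xi^q\}$, with the unit-sphere terms computed by restriction to proper subgroups and the Lewis--Stong $C_p$-calculation.

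That said, your proposal is a sketch rather than a proof, and the places you flag as ``routine but lengthy'' are where the actual content lies. Two points deserve emphasis. First, Proposition~\ref{cohmac} is a vanishing criterion, not an identification theorem: it tells you a cohomological Mackey functor is zero when its values at $G/C_p$ and $G/C_q$ vanish, but it does not by itself distinguish, say, $R_{pq}$ from $L_{pq}$ from $\KK_pL_p$, all of which have the same underlying groups. Pinning down the restriction and transfer maps requires tracking how the connecting homomorphisms in your long exact sequence interact with the Mackey structure, or else invoking the explicit $C_p$- and $C_q$-restrictions and assembling them. Second, the extension problems you mention are genuine: for instance, in the range $|\alpha|<0$ with both fixed-point dimensions large and odd, one must argue that the answer is $\KK_p\bZp\oplus\KK_q\bZq$ rather than a nonsplit extension, and this uses that $p$ and $q$ are coprime (so the torsion splits). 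None of this is hard, but none of it is written down in your proposal either; as it stands you have correctly identified the skeleton of the argument without supplying the connective tissue.
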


Also, recall the definitions of the classes $a_V$ and $u_V$ in $\uH^\bigstar_G(S^0; \uZ)$ from \cite{HHR16}
\begin{defn}\label{au} 
(1) For a representation $V$ with $V^G =0,$ let $a_V \in \underline{\pi}_{-V}(S^0)$ be the map $S^0 \to S^V$ which embeds $S^0$ to $S^V$ to $0$ and $\infty$ in $S^V.$ We will also use $a_V$ for its Hurewicz image in $\underline{\pi}_{-V}(H \uZ)(G/G) \cong \uH^V_G(S^0; \uZ)(G/G).$

(2) For an orientable representation $W$ of dimension $n,$ let $u_W$ be the generator of $\underline{H}_n (S^W; \underline{\mathbb{Z}})(G/G)$ which restricts to the choice of orientation in  $ \underline{H}_n (S^W; \uZ)(G/e)\cong H_n(S^n; \mathbb{Z}).$ In cohomology grading , $u_W \in \tilde{H}^{V-dim V}_G(S^0; \uZ).$

In particular, for the group $G=C_{pq},$ where $p$ and $q$ are distinct odd primes, we have $a_{\xi^j} \in \tilde{H}^{\xi^j}_G(S^0; \uZ)$ and $u_{\xi^j}\in \tilde{H}^{\xi^j -2}_G(S^0; \uZ).$
\end{defn}

Next, we have equivalences of spectra which simplify computations:
 
\begin{lemma}\label{HK} There are the following equivalences

(1) $S^\xi \wedge H\uZ \cong S^{\xi^j} \wedge H \uZ$ if $(j, pq) =1.$

(2) $S^{\xi^p} \wedge H\uZ \cong S^{\xi^{jp}} \wedge H \uZ$ if $(j, q) =1.$

(3) $S^{\xi^q} \wedge H\uZ \cong S^{\xi^{jq}} \wedge H \uZ$ if $(j, p) =1.$

(4) $\Sigma^{\xi^p}H\uZ \simeq \Sigma^{\xi} H(\KK_p L_p).$

Moreover, there is a map $H\uZ  \to \Sigma^{\xi-\xi^p}H\uZ$
\end{lemma}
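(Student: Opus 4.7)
The plan is to deduce all parts of Lemma \ref{HK} directly from Theorem \ref{compr}, exploiting the principle that a $G$-spectrum whose homotopy Mackey functors are concentrated in a single degree is an Eilenberg-MacLane spectrum. For each claimed equivalence $S^V \wedge H\uZ \simeq S^W \wedge H\uM$ the strategy is to recognize the desuspended spectrum $\Sigma^{V-W} H\uZ$ as $H\uM$ by computing $\underline{\pi}_n(\Sigma^{V-W} H\uZ) = \uH^{V - W - n}(S^0; \uZ)$ using Theorem \ref{compr}, and verifying shift by shift that this Mackey functor vanishes for $n \neq 0$ while taking the desired value at $n = 0$.

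For parts (1)-(3) the virtual representations $\xi^j - \xi$, $\xi^{jp} - \xi^p$, and $\xi^{jq} - \xi^q$ each have the dimension triple $(|\alpha|, |\alpha^{C_p}|, |\alpha^{C_q}|) = (0,0,0)$: in (1) both $\xi$ and $\xi^j$ have trivial fixed points under every nontrivial subgroup; in (2) both $\xi^p$ and $\xi^{jp}$ are inflated from faithful $C_q$-representations via $C_{pq} \to C_q$; and in (3) both $\xi^q$ and $\xi^{jq}$ are inflated via $C_{pq} \to C_p$. Shifting by an integer $n$ moves the triple to $(-n,-n,-n)$, and inspection of Theorem \ref{compr} shows that the only case where a triple of three equal entries yields a nonzero answer is $n = 0$, where the answer is $R_{pq} = \uZ$. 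Hence each of the three desuspended spectra is $H\uZ$, and the equivalences follow.

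For part (4) the same analysis is applied to $\alpha_n := \xi^p - \xi - n$, which has dimension triple $(-n, 2-n, -n)$. At $n = 0$ the triple $(0,2,0)$ sits in the row ``$|\alpha|=0$, $|\alpha^{C_p}|>0$, $|\alpha^{C_q}| \leq 0$'', giving $\underline{\pi}_0 = \KK_p L_p$. For $n > 0$ one has $|\alpha_n| < 0$ while $|\alpha_n^{C_p}| \leq 1$ and $|\alpha_n^{C_q}| < 0$, so the strict inequalities in the first three (negative-$|\alpha|$) rows of Theorem \ref{compr} cannot hold. For $n < 0$ all three entries of the triple are strictly positive, which contradicts the requirement of at least one fixed-point dimension $\leq 0$ in the three positive-$|\alpha|$ rows; and additionally the odd-$n$ shifts fall outside the even-parity restrictions of those rows. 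Thus $\Sigma^{\xi^p - \xi} H\uZ$ has homotopy concentrated in degree $0$, is therefore equivalent to $H(\KK_p L_p)$, and smashing with $S^\xi$ yields the equivalence of (4). For the moreover statement, Theorem \ref{compr} applied to $\alpha = \xi - \xi^p$ with dimension triple $(0,-2,0)$ places it in the $R_{pq}$ row, so $\underline{\pi}_0(\Sigma^{\xi - \xi^p} H\uZ) = \uZ$, and the generator of $\uZ(G/G) = \Z$ provides the desired $H\uZ$-module map $H\uZ \to \Sigma^{\xi-\xi^p} H\uZ$.

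The only step that requires real bookkeeping is the vanishing analysis in (4): one must check that every integer shift $\alpha_n$ lands in a vanishing row of Theorem \ref{compr}, splitting into subcases according to the sign and parity of $n$. Each subcase is routine once set up, but the combinatorics of the ten-row table have to be worked through carefully. Once the vanishing is established, the identification of the relevant spectra as Eilenberg-MacLane spectra of the stated Mackey functors follows automatically from the equivariant Postnikov recognition principle.
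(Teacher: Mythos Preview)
Your argument is correct and, for parts (1)--(4), is exactly the approach the paper takes: compute the homotopy Mackey functors of the desuspended spectrum via Theorem~\ref{compr} and invoke the Eilenberg--MacLane recognition principle. The paper compresses all of this into one sentence, while you supply the case analysis.

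For the ``moreover'' clause you diverge slightly. You produce the map as the $H\uZ$-module map corresponding to a generator of $\underline{\pi}_0(\Sigma^{\xi-\xi^p}H\uZ)\cong\uZ$, whereas the paper constructs it as the composite
\[
H\uZ \xrightarrow{\ \simeq\ } \Sigma^{\xi-\xi^p}H\KK_pL_p \longrightarrow \Sigma^{\xi-\xi^p}H\uZ,
\]
the first arrow being the equivalence of (4) and the second induced by the Mackey-functor map $\KK_pL_p\to\KK_pR_p=\uZ$ coming from $L_p\to R_p$ (identity on $C_p/e$). The two constructions yield the same map up to a unit, but the paper's presentation makes it immediately visible that the map becomes an equivalence after $res^G_{C_q}$, a fact exploited in Proposition~\ref{cofiber}; from your description this property is true but has to be argued separately.
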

\begin{proof}
All these equivalences are directly follows from the computation of the homotopy groups of $S^{\xi^j -\xi^k}\wedge H \uZ.$ Therefore, we use the Theorem \ref{compr} to conclude the results.

Consider the Mackey functor map
$$L_p \to R_p$$ which is identity on the orbit $C_p/e.$ This map and the equivalence (4) give the following 

$$H\uZ \stackrel{\cong}{\to} \Sigma^{\xi-\xi^p}H\KK_p L_p \to \Sigma^{\xi-\xi^p}H \uZ.$$ We denote this map by $u_{\xi-\xi^p}.$
\end{proof}

Thus, only the $C_{pq}$- representations of the form $$V = a + b\xi +c \xi^p +d \xi^q \text{ for } a,b ,c ,d \in \Z$$ are useful for our computations. Hence, we are interested in the classes $a_\xi, a_{\xi^p}, a_{\xi^q}$ and $u_\xi, u_{\xi^p}, u_{\xi^q}.$ These classes satisfy the following

\begin{prop}\label{relation}
(1) For two fixed point free representations $V$ and $W$, we have $$a_{V+W} = a_Va_W \;\;\text{and} \; \; u_{V+W} = u_V u_W.$$

(2) $pqa_\xi=0,  q a_{\xi^p }=0,$ and $pa_{\xi^q} =0.$

(3) $u_\xi a_{\xi^p} = p u_{\xi^p} a_{\xi} $ and $u_\xi a_{\xi^q} = q u_{\xi^q} a_{\xi}$
\end{prop}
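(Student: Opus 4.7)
The plan is to handle the three parts separately, with parts (1) and (2) being essentially formal, and part (3) requiring a reduction to subgroup calculations.

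For Part (1), I would argue directly from the definitions. Under the canonical identification $S^{V+W} \cong S^V \wedge S^W$, the inclusion $\{0,\infty\} \hookrightarrow S^{V+W}$ representing $a_{V+W}$ factors as the smash product of the two inclusions $\{0,\infty\} \hookrightarrow S^V$ and $\{0,\infty\} \hookrightarrow S^W$, so $a_{V+W} = a_V a_W$. Similarly, the chosen orientation class on the product sphere is the smash product of orientation classes, giving $u_{V+W} = u_V u_W$.

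For Part (2), I would read off the bound from Theorem \ref{compr}. For $\alpha = \xi$ we have $|\alpha|=2>0$ and $|\alpha^{C_p}|=|\alpha^{C_q}|=0$, so the ambient Mackey functor is $\KK_p\bZp \oplus \KK_q\bZq$ whose value at $G/G$ is $\Z/p \oplus \Z/q$, a group of exponent $pq$. Hence $pq\,a_\xi=0$. For $\alpha = \xi^p$ we have $|\alpha^{C_p}|=2>0$ and $|\alpha^{C_q}|=0$, giving ambient group $\KK_q\bZq(G/G) \cong \Z/q$, so $q a_{\xi^p}=0$; the statement $p a_{\xi^q}=0$ is symmetric.

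Part (3) is the substantive step. Both $u_\xi a_{\xi^p}$ and $p u_{\xi^p} a_\xi$ live in $\tilde H^{\xi+\xi^p-2}_G(S^0;\uZ)(G/G)$, which by Theorem \ref{compr} is again $(\KK_p\bZp \oplus \KK_q\bZq)(G/G) \cong \Z/p\oplus\Z/q$. The key point I would establish is a detection principle: evaluating the Mackey functors $\KK_p\bZp$ and $\KK_q\bZq$ at the orbits $G/C_p$ and $G/C_q$ and tracing the restriction maps shows that restriction to $C_p$ is an isomorphism on the $\KK_p\bZp$ summand and kills the $\KK_q\bZq$ summand, and vice versa; therefore an element of the ambient group is zero iff both restrictions vanish. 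Restricting to $C_p$: since $\xi^p|_{C_p}$ is trivial, the map $a_{\xi^p}|_{C_p}$ is null-homotopic and $u_{\xi^p}|_{C_p}=1$, so the identity reduces to $0 = p\,a_\sigma$ in $\tilde H^\sigma_{C_p}(S^0;\uZ)\cong \Z/p$ (where $\sigma = \xi|_{C_p}$), which is the Lewis relation for $C_p$. Restricting to $C_q$: both $\xi$ and $\xi^p$ become faithful $C_q$-representations $\tau$ and $\tau^p$, and the identity becomes the $C_q$-analog $u_\tau a_{\tau^p} = p u_{\tau^p} a_\tau$, a standard relation that follows from Lewis's $C_q$-computation (arising from the existence of a $C_q$-equivariant degree-$p$ map $S^\tau \to S^{\tau^p}$). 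The second relation is proven by exchanging the roles of $p$ and $q$.

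The main obstacle is justifying the detection principle cleanly, which amounts to unpacking the tensor-product Mackey functors $\KK_p\bZp = \bZp \otimes R_q$ and $\KK_q\bZq = \bZq \otimes R_p$ at each orbit and verifying that the restriction maps are as claimed; as an alternative, one may invoke Proposition \ref{cohmac} applied to the Mackey functor submodule generated by the difference $u_\xi a_{\xi^p} - p u_{\xi^p} a_\xi$, using that this ambient cohomology is a module over the cohomological Mackey functor $\uZ = R_{pq}$.
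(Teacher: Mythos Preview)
Your proof is correct. Parts (1) and (2) match the paper's treatment exactly: the paper simply cites \cite{HHR16} for (1) and reads off the orders from Theorem~\ref{compr} for (2).

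For part (3) your route is genuinely different. The paper works directly at the $G$-level: it takes the canonical equivariant degree-$p$ map $a_{\xi^p}/a_\xi: S^\xi \to S^{\xi^p}$, pairs it with the map $u_\xi/u_{\xi^p}: S^{\xi^p}\wedge H\uZ \to S^\xi\wedge H\uZ$ supplied by Lemma~\ref{HK}, and observes that the composite $(u_\xi/u_{\xi^p})\circ(a_{\xi^p}/a_\xi)$ is multiplication by $p$ on $S^\xi\wedge H\uZ$; the relation then falls out of the two commuting triangles. You instead prove a detection principle on $\KK_p\bZp\oplus\KK_q\bZq$ and reduce to the subgroup relations, one of which is trivial and the other of which is the Lewis/gold relation over $C_q$. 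The paper's argument is more self-contained and produces the relation as an identity of maps of spectra, not merely of cohomology classes. Your argument is cleaner algebraically once the detection principle is in place and makes explicit that nothing new happens for $C_{pq}$ beyond what is visible over $C_p$ and $C_q$; note, however, that the $C_q$-relation $u_\tau a_{\tau^p}=p\,u_{\tau^p}a_\tau$ you invoke ultimately rests on the same degree-$p$ map $S^\tau\to S^{\tau^p}$ that the paper exploits directly at the top level.
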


\begin{proof}
(1) is an general fact about any finite group, which follows from \cite{HHR16}.

 For (2), the values of $\underline{H}^{\xi}_{C_{pq}}(S^0; \uZ)$, $\underline{H}^{\xi^p}_{C_{pq}}(S^0; \uZ)$, and $\underline{H}^{\xi^q}_{C_{pq}}(S^0; \uZ)$ from the Theorem \ref{compr} yield the result.

For (3): We know there is a canonical map $S^\xi \to S^{\xi^p}$ with non-equivariant degree $p.$  We denote this map by $a_{\xi^p}/ a_{\xi}.$ We have a commutative diagram $$\xymatrix{ & S^{\xi}  \ar[dd]^{a_{\xi^p}/ a_{\xi}} \\ S^0 \ar[ur]^{a_{\xi}} \ar[dr]_{a_{\xi^p}} \\ & S^{\xi^p}}$$ This yields a map $a_{\xi^p}/ a_{\xi} \wedge H\uZ : S^\xi \wedge H\uZ \to S^{\xi^p} \wedge H\uZ.$ Now, we construct another map $u_{\xi}/ u_{\xi^p}$ such that $$\xymatrix{ & S^{\xi^p} \wedge H\uZ \ar[dd]^{u_{\xi}/ u_{\xi^p}} \\ S^2 \ar[ur]^{u_{\xi^p}} \ar[dr]_{u_{\xi}} \\ & S^{\xi}\wedge H\uZ}$$

The Lemma \ref{HK} yields a map $S^{\xi^p} \wedge H\uZ \to S^\xi \wedge H\uZ,$ of degree one. Therefore, the composition $(u_{\xi}/ u_{\xi^p}) (a_{\xi^p}/ a_{\xi})$ gives a factorization of the degree $p$ map on $S^\xi \wedge H\uZ.$ Hence the first part follows. The result for the prime $q$ follows analogously.
\end{proof}

Proposition \ref{relation} yields a map $$\Phi: \frac{\Z[a_\xi, a_{\xi^p}, a_{\xi^q}, u_\xi, u_{\xi^p}, u_{\xi^q}]}{(pqa_\xi, q a_{\xi^p }, pa_{\xi^q}, u_\xi a_{\xi^p}- p u_{\xi^p} a_{\xi}, u_\xi a_{\xi^q}-q u_{\xi^q} a_{\xi})} \longrightarrow \pi_\bigstar H\uZ.$$ 

The main result of this section is:

\begin{thm}\label{main1}
If $\bigstar = \ast - V,$  then the map $\Phi$ is a isomorphism of rings.
\end{thm}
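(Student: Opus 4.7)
The plan is to verify $\Phi$ is an isomorphism degree by degree, by comparing an explicit $\Z$-module normal form on the source with the additive computation of Theorem \ref{compr}. Proposition \ref{relation} already shows $\Phi$ is a well-defined ring homomorphism. By Lemma \ref{HK} it suffices to treat representations of the form $\alpha = b\xi + c\xi^p + d\xi^q - 2s$ with $b, c, d, s \geq 0$; the case division in Theorem \ref{compr} is then controlled by the signs of $|\alpha| = 2(b+c+d-s)$, $|\alpha^{C_p}| = 2(c-s)$, and $|\alpha^{C_q}| = 2(d-s)$, all of which are even, so only the even-dimensional cases of Theorem \ref{compr} are relevant.

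First I would reduce the source to a normal form. Applying $u_\xi a_{\xi^p} = p u_{\xi^p} a_\xi$ and $u_\xi a_{\xi^q} = q u_{\xi^q} a_\xi$ repeatedly, every monomial rewrites as a $\Z$-linear combination of monomials of one of two shapes: \emph{(A)} $a_\xi^i a_{\xi^p}^j a_{\xi^q}^k u_{\xi^p}^m u_{\xi^q}^n$ with no $u_\xi$ factor, or \emph{(B)} $a_\xi^i u_\xi^l u_{\xi^p}^m u_{\xi^q}^n$ with $l \geq 1$ and no $a_{\xi^p}, a_{\xi^q}$. The torsion relations pin down the order of each normal-form monomial, and the coprimality of $p$ and $q$ forces $a_{\xi^p} a_{\xi^q} = 0$ (since this product is annihilated by both $p$ and $q$), together with analogous collapses such as $a_\xi^i a_{\xi^p}^j \in \Z/q$ and $a_\xi^i a_{\xi^q}^k \in \Z/p$. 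Enumerating the surviving monomials in each graded piece gives an explicit $\Z$-module presentation of the source.

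Next, for each $\alpha$ I would match this presentation against $\uH^\alpha(G/G)$ read off from Theorem \ref{compr}. For instance, when $|\alpha|>0$ with $s \geq \max(c,d)$ and $s < c + d$, there are exactly two type-\emph{(A)} survivors, of orders $p$ and $q$, summing to $\Z/p \oplus \Z/q \cong \Z/pq$, which is $(\KK_p\bZp \oplus \KK_q\bZq)(G/G)$; when instead $s \geq c + d$ and $|\alpha|>0$, a single type-\emph{(B)} monomial of order $pq$ realises the same target. For $|\alpha|=0$ a single free generator matches $R_{pq}(G/G) = \Z$, and the degrees where Theorem \ref{compr} predicts zero are precisely those in which no normal-form monomial survives.

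The main obstacle is case-by-case bookkeeping: there are several sign patterns of $|\alpha|, |\alpha^{C_p}|, |\alpha^{C_q}|$, and for each one must verify both the count and the torsion order of the normal-form monomials reproduce the Mackey functor value of Theorem \ref{compr}. A possibly cleaner route, worth attempting in parallel, is to induct on the magnitude of $V$ via the cofibre sequences $S(W)_+ \to S^0 \to S^W$ for $W \in \{\xi, \xi^p, \xi^q\}$: the connecting homomorphisms are multiplication by the Euler classes $a_W$, which reduces the whole matching to a handful of boundary computations, fed by Lewis's $C_p$ and $C_q$ calculations and the box product structure recalled at the start of this section.
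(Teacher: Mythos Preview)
Your approach is essentially the paper's: reduce via Lemma \ref{HK} to $\bigstar = m\xi + n\xi^p + l\xi^q - 2a$, rewrite the source into normal-form monomials (your types (A) and (B) correspond exactly to the paper's Types II/III and Type I, respectively), and then match degree by degree against the additive answer of Theorem \ref{compr}. Your version is, if anything, more explicit about the torsion bookkeeping---e.g.\ deducing $a_{\xi^p}a_{\xi^q}=0$ from coprimality and exhibiting the $\Z/p\oplus\Z/q$ split when both $|\alpha^{C_p}|$ and $|\alpha^{C_q}|$ are nonpositive---than the paper's own argument.
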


\begin{proof}
Lemma \ref{HK} allows us to take $V$ to be the representation $k+ m \xi + n\xi^p + l \xi^q$ for non-negative integers $m, n$ and $l$. If $|\bigstar|<0$ or $>0$ odd, then by Theorem \ref{compr}, $\uH^\bigstar_G(S^0; \uZ)=0.$ Also, since all the classes $a_\xi^j$ and $u_\xi^j$ have even dimensions greater than equal to zero, the map $\Phi$ is an isomorphism if $|\bigstar| <0.$

We are left with the case $|\bigstar|$ is a  non-negative and even. Therefore, the dimension of the fixed points of $\bigstar$ are even. So $\bigstar$ is of the form $m \xi + n\xi^p + l \xi^q -2a$ for $m,n,l \geq 0$ and $a \in \Z.$ We denote this element by $(m,n,l,a).$

If $|\bigstar|=0$, $\tilde{H}^\bigstar_G(S^0; \uZ)$ is generated by the unique monomial $u_
\xi^m u_{\xi^p}^n u_{\xi^q}^l.$ 

Next, assume that $|\bigstar| >0.$ Then we may write $\bigstar = |u_\xi^{a_1}a_\xi^{b_1}u_{\xi^p}^{a_2}a_{\xi^p}^{b_2} u_{\xi^q}^{a_3}a_{\xi^q}^{b_3}|.$ From Proposition \ref{relation} (2), it is impossible to consider the presence of both the classes $a_{\xi^p}$ and $a_{\xi^q}$ for the non-trivial generator $u_\xi^{a_1}a_\xi^{b_1}u_{\xi^p}^{a_2}a_{\xi^p}^{b_2} u_{\xi^q}^{a_3}a_{\xi^q}^{b_3}.$ Without loss of generality we take $b_3=0.$ Again, using (3) of Proposition \ref{relation}, if $a_1 >0$ then either $b_2=0$ or $b_3=0.$ Then, for the $\bigstar = (m,n,l,a)$ we have at most three types of possible generators:

$$ {\it Type \: I}: \; u_\xi^{a-n-l}a_\xi^{m-a+n+1}u_{\xi^p}^n u_{\xi^q}^l$$ 

$${\it Type \: II}: \; a_\xi^m u_{\xi^p}^{a-l} a_{\xi^p}^{n-a+l} u_{\xi^q}^l$$

$${\it Type \: III}: \; a_\xi^m u_{\xi^p}^{n} u_{\xi^q}^{a-n} a_{\xi^q}^{l-a+n}$$

Here, the quadruple $(m,n,l,a)$ satisfies the following relations for different types:

$${\it Type \: I}: \; a \geq l+n, \; m \geq a-(l+n).$$

$${\it Type \: II}: \; a \geq l, \; n \geq a-l.$$

$${\it Type \: III}: \; a \geq n, \; l \geq a-n.$$
Suppose, {\it Type I} and {\it Type II} happen together, then we have $a=l+n$ and $m \geq 0.$ So, the generators in {\it Type I} and {\it Type II} yield only one term, $a_\xi^m u_{\xi^p}^n u_{\xi^q}^l,$ which maps non-trivially. 

If {\it Type I}, {\it Type II} and {\it Type III}  happen together, then we have again $a=l+n$. Then, for each $(m,n,l,a)$, we get only one term, $a_\xi^m u_{\xi^p}^n u_{\xi^q}^l,$ which maps non-trivially. Therefore, for each $\bigstar$ we are getting exactly one class in the domain of $\Phi.$ Hence, it is a bijection.
\end{proof}

\section{$C_{pq}$-slices for the spectrum $S^V \wedge H \uZ$}

In this section, for each $\alpha \in RO(C_{pq})$, we compute the $C_{pq}$-slices for the spectrum $\Sigma^\alpha H\uZ$. First we recall some useful facts about slices.

In the introduction we have already seen the meaning of $X \geq n$ for a $G$-spectrum $X.$ Such spectra are called $n$-slice connective.  Now, we define the $n$-slice coconnnective case. We denote it by $X \leq n.$

 \begin{defn}
A spectrum $X \leq n$ if the restriction $res^G_H(X) \leq n$ and $[S^{k\rho_G}, X]^G =0$ for all $k \geq 0$ such that $k dim(\rho_G)> n.$
\end{defn}

 By \cite[Corollary 2.6]{Hil12}, if a spectrum satisfies $k \leq X \leq n$,  the same holds for   all restrictions. But the definition of $n$-slice connective case is not computational. A more convenient criterion for $X \geq n$ is given by \cite[Theorem 2.10]{HY18}:
 
 \begin{thm}\label{connective}
 A $G$-spectrum $X \geq n$ if and only if the non equivariant homotopy groups $$\pi_k\Phi^H(X)=0$$ for all $H \leq G$ and $k < \frac{n}{|H|}.$ Here $\Phi^H(X)$ is the $H$-geometric fixed point of the spectrum $X$, that is, the geometric fixed point of $res^G_H(X).$
  \end{thm}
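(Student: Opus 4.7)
The strategy is to check both implications first on the generators of the $n$-slice category, then extend via formal properties of geometric fixed points.

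\emph{Necessity.} Suppose $X \geq n$. The class of $G$-spectra satisfying $\pi_k \Phi^H(Y) = 0$ for all $H \leq G$ and all $k < n/|H|$ is closed under weak equivalences, cofibre sequences, extensions, and arbitrary coproducts, because each $\Phi^H$ is triangulated and commutes with filtered homotopy colimits. Hence it suffices to verify the condition on the generators $(G/K)_+ \wedge S^{j\rho_K}$ with $j|K| \geq n$. Using the decomposition $\rho_K|_H \cong [K:H]\,\rho_H$ as $H$-representations, one sees that $(j\rho_K)^H$ has real dimension $j[K:H] = j|K|/|H|$, so $\Phi^H(S^{j\rho_K}) \simeq S^{j|K|/|H|}$. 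Moreover $\Phi^H((G/K)_+)$ is a wedge of copies of $S^0$ indexed by $(G/K)^H$, which is empty unless $H$ is subconjugate to $K$. Either way, $\Phi^H((G/K)_+ \wedge S^{j\rho_K})$ is either contractible or a wedge of non-equivariant spheres of dimension $j|K|/|H| \geq n/|H|$, and its homotopy in degrees strictly less than $n/|H|$ vanishes.

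\emph{Sufficiency.} Assume $\pi_k \Phi^H(X) = 0$ for all $H \leq G$ and $k < n/|H|$. The plan is to build, via the equivariant small object argument, a cellular model $\tilde{X} \to X$ assembled from cells $(G/K)_+ \wedge S^{j\rho_K}$ with $j|K| \geq n$, realising classes in $\pi_\ast \Phi^H(X)$ and then killing the resulting relations inductively. The vanishing hypothesis guarantees that at each stage the classes one needs to hit sit in degrees $k \geq n/|H|$, so no cell with $j|K| < n$ is ever needed, and $\tilde{X}$ lies in the $n$-slice category by construction. By design the comparison map $\tilde{X} \to X$ induces isomorphisms on $\pi_\ast \Phi^H$ for every $H \leq G$. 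Since the family $\{\Phi^H\}_{H \leq G}$ jointly detects weak equivalences of orthogonal $G$-spectra, this forces $\tilde{X} \to X$ to be a $G$-equivalence, whence $X \simeq \tilde{X} \geq n$.

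\emph{Main obstacle.} The forward direction is essentially formal once the geometric fixed points of a slice cell are understood. The technical heart is sufficiency: one must arrange the cellular construction so that a class in $\pi_k \Phi^H(X)$ is realised by a genuine equivariant map out of a slice cell $(G/H)_+ \wedge S^{j\rho_H}$ with $j|H| \geq n$, and the inductive bookkeeping has to proceed over all subgroups $H \leq G$ simultaneously while never dropping below the slice dimension bound. This is the step where Ullman's reformulation of the slice filtration in terms of localising subcategories generated by the cells $(G/H)_+ \wedge S^{k\rho_H}$ is crucial, and where one must invoke the joint detection property of geometric fixed points to close the argument.
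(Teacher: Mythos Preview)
The paper does not prove this theorem at all: it is simply quoted from Hill--Yarnall \cite[Theorem~2.10]{HY18} with no argument supplied. There is therefore no proof in the present paper to compare your proposal against.

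Assessed on its own terms, your necessity direction is correct and essentially complete. The vanishing condition on $\pi_k\Phi^H$ is visibly closed under equivalences, cofibres, coproducts, and retracts, and your computation of $\Phi^H$ on a generating cell $(G/K)_+\wedge S^{j\rho_K}$ is right: after restriction and the double coset decomposition the only surviving summands (those with $H$ subconjugate to $K$) contribute spheres of dimension $j|K|/|H|\geq n/|H|$.

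The sufficiency direction, however, has a genuine gap. You propose to attach slice cells $(G/H)_+\wedge S^{j\rho_H}$ so as to ``realise classes in $\pi_*\Phi^H(X)$'', but a map out of such a cell gives an element of $\pi^H_{j\rho_H}(X)$, and the comparison map $\pi^H_{j\rho_H}(X)\to\pi_j\Phi^H(X)$ is \emph{not} surjective in general: the target is the filtered colimit of $\pi^H_{j+V}(X)$ over all representations $V$ with $V^H=0$, and a given class need only lift to some $V$ that is not a multiple of the reduced regular representation $\bar\rho_H$. Thus there is no reason the map $\tilde X\to X$ produced by your construction should induce isomorphisms on every $\pi_*\Phi^H$. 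What the small-object argument with slice cells of dimension $\geq n$ actually builds is the slice connective cover $P_nX\to X$, and one still owes an argument that its cofibre $P^{n-1}X$ is contractible under the stated hypothesis on $X$. Hill and Yarnall provide this missing step by an induction on the order of the group via the isotropy separation sequence, rather than by a direct cell-by-cell attachment; your outline correctly identifies the detection principle but does not supply the bridge.
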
 
  
\begin{rmk}  Note that the geometric fixed point spectrum $\Phi^{C_{pq}}(X)$ is trivial.
  \end{rmk}

\begin{prop}\label{regular}
If $X$ be a $m$-slice,  $\Sigma^{k \rho_G} X$ is  $(m + k|G|)$-slice for all $k \in \Z,$ that is,

$$P^{m+k|G|}_{m+k|G|}(\Sigma^{\rho_G} X) \simeq \Sigma^{\rho_G} P^m_m(X).$$
\end{prop}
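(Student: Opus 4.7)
The plan is to reduce to the case $k=1$ and verify the two halves of the slice condition separately. Since $\Sigma^{\rho_G}$ is an equivalence with inverse $\Sigma^{-\rho_G}$, iterating the $k=1$ case in either direction yields the statement for all $k\in\mathbb{Z}$. So the substantive content is to show that if $X$ is an $m$-slice, then $m+|G|\le \Sigma^{\rho_G}X\le m+|G|$.

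For the connective direction $\Sigma^{\rho_G}X\ge m+|G|$, I would apply Theorem \ref{connective}. Since $\Phi^H$ is symmetric monoidal and $\Phi^H(S^V)=S^{V^H}$,
$$\Phi^H(\Sigma^{\rho_G}X)\simeq S^{\rho_G^H}\wedge \Phi^H(X).$$
The fixed subspace $\rho_G^H\cong \rho_{G/H}$ has real dimension $|G|/|H|$, so $\pi_k\Phi^H(\Sigma^{\rho_G}X)\cong \pi_{k-|G|/|H|}\Phi^H(X)$. The hypothesis $X\ge m$ forces $\pi_{k-|G|/|H|}\Phi^H(X)=0$ whenever $k-|G|/|H|<m/|H|$, i.e.\ $k<(m+|G|)/|H|$, which is exactly the criterion of Theorem \ref{connective} at level $m+|G|$.

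For the coconnective direction $\Sigma^{\rho_G}X\le m+|G|$, the regular-representation cell condition is immediate from the $(\Sigma^{\rho_G},\Sigma^{-\rho_G})$-adjunction: for $k\ge 1$ with $k|G|>m+|G|$, equivalently $(k-1)|G|>m$,
$$[S^{k\rho_G},\Sigma^{\rho_G}X]^G \cong [S^{(k-1)\rho_G},X]^G = 0$$
by $X\le m$. The restriction condition demands $res^G_H(\Sigma^{\rho_G}X)\le m+|G|$ for every proper $H<G$; using $res^G_H\rho_G\cong (|G|/|H|)\rho_H$, this becomes $\Sigma^{(|G|/|H|)\rho_H}res^G_HX\le m+|G|$. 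With $res^G_HX\le m$ and $|H|<|G|$, I would induct on $|G|$, applying the $H$-version of the proposition $|G|/|H|$ times to shift the coconnective bound by $(|G|/|H|)|H|=|G|$ in total.

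The main obstacle is this inductive step: one must carry both halves of the slice condition through restrictions simultaneously, with the base case being the trivial group (where the assertion reduces to the classical Postnikov shift under $S^1$-smashing) and the edge cases $m+|G|<0$ handled by the induction hypothesis. Once $\Sigma^{\rho_G}X$ is known to be an $(m+|G|)$-slice, the displayed equivalence is formal: for an $m$-slice $X$ one has $P^m_mX\simeq X$, and $\Sigma^{\rho_G}X$ being an $(m+|G|)$-slice gives $P^{m+|G|}_{m+|G|}\Sigma^{\rho_G}X\simeq \Sigma^{\rho_G}X$, so both sides of the asserted equivalence collapse to $\Sigma^{\rho_G}X$.
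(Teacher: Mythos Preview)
The paper does not prove this; its entire proof is the citation ``See \cite[Corollary 4.25]{HHR16}.'' Your proposal goes further and supplies an actual argument, and it is essentially correct. For the connective half you invoke Theorem~\ref{connective}; this is cleaner than HHR's original proof, which instead observes that $\Sigma^{\rho_G}$ permutes the generating slice cells $G_+\wedge_H S^{k\rho_H}$ (via $res^G_H\rho_G\cong (|G|/|H|)\rho_H$ and the projection formula) and hence preserves the localizing subcategories defining $\ge n$. For the coconnective half your recursive-definition-plus-induction approach is the standard route. Two small points to tighten. First, the induction on $|G|$ should be phrased for the statement ``$\Sigma^{\rho_H}$ takes $\le m$ to $\le m+|H|$'' rather than for the slice-to-slice version, since $res^G_HX$ is only known to satisfy $\le m$, not to be an $m$-slice; your remark about carrying both halves simultaneously suggests you already have this in mind. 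Second, when $m+|G|<0$ the value $k=0$ is admissible and you must show $[S^{-\rho_G},X]^G=0$, which does not literally follow from the definition stated in the paper (it only tests $S^{k\rho_G}$ for $k\ge 0$). To close this you need the dual characterization $X\le n\iff [Y,X]^G=0$ for all $Y\ge n+1$, together with $S^{-\rho_G}\ge -|G|$ (immediate from Theorem~\ref{connective}); both facts are standard and available in \cite{HHR16} and \cite{Ull13}.
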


\begin{proof}
See \cite[Corollary 4.25]{HHR16}.
\end{proof}

As we discussed that the Slice spectral sequence is a homotopy spectral sequence and the filtration for the homotopy groups of spectrum  $X$ is given by 

$$F^s\underline{\pi}_t(X) = ker(\underline{\pi}_t(X)\to \underline{\pi}_t(P^{t+s-1}X )$$ such that 
$$\underline{\pi}_{t-s}(P^t_t X) \cong F^s\underline{\pi}_{t-s}(X)/ F^{s+1}\underline{\pi}_{t-s}(X).$$

In \cite{Ull13}, it has been proved that the above filtration of the homotopy groups can be computed from the certain filtration of the Mackey functor in the following sense:

\begin{prop}\label{algfil}
If $n>0,$ then for a $(n+1)$-coconnected spectrum $X,$ then

$$F^s\underline{\pi}_t(X) \cong \mathcal{F}^{(s+n-1)/n}\underline{\pi}_n X.$$ Here, for a Mackey functor $\uM$ the filtration $\mathcal{F}^k\uM$ is given by as follows:

$$\mathcal{F}^k\uM(G/H) = \{x\in \uM(G/H): i^\ast_{|J|}x =0, \text{ for all } J \subset H, |J|\leq k \}$$
 and $i^\ast$ is a functor between the category of $G$-Mackey functors such that $$i^\ast_a\uM(G/H) = \begin{cases}
0, & \text{ if } |H|>a,\\
\uM(G/H), & \text{ otherwise.} 
 \end{cases}$$
\end{prop}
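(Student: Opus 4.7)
The plan is to reduce to the Eilenberg--MacLane case and then identify the slice filtration on the homotopy Mackey functor with the algebraic filtration $\mathcal{F}^\bullet$ via Ullman's description. Since $X$ is $(n+1)$-coconnected, the natural map $X \to \Sigma^n H\underline{\pi}_n X$ is an equivalence on all relevant slice quotients in the range that contributes to $F^s\underline{\pi}_t(X)$ (the higher homotopy groups of $X$ are killed by coconnectivity, and the lower ones do not contribute to the kernel of $\underline{\pi}_t(X)\to \underline{\pi}_t(P^{t+s-1}X)$ in the relevant filtration window). So without loss of generality I can replace $X$ by $\Sigma^n H\uM$ with $\uM = \underline{\pi}_n X$ and work out the slice tower of a single suspended Eilenberg--MacLane spectrum.

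Next I would compute $P^{t+s-1}(\Sigma^n H\uM)$. Using Theorem \ref{connective}, a cell $\Sigma^{\infty}_G (G/H)_+\wedge S^{k\rho_H}$ is $\geq m$ exactly when $k|H|\geq m$; dually, mapping into $\Sigma^n H\uM$ from such cells and applying $\Phi^H$ shows that only cells with $|H|\leq (s+n-1)/n$ can support classes that die in $P^{t+s-1}$. This is the geometric origin of the index $(s+n-1)/n$ and of the truncation functor $i^*_a$: an element $x\in \uM(G/H)$ survives to $P^{t+s-1}$ precisely when its restrictions $i^*_{|J|}x$ to all proper subgroups $J\subset H$ with $|J|\leq (s+n-1)/n$ vanish. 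Equivalently, $x\in \mathcal{F}^{(s+n-1)/n}\uM(G/H)$ iff it lies in the kernel of $\underline{\pi}_t(\Sigma^n H\uM)\to \underline{\pi}_t(P^{t+s-1}\Sigma^n H\uM)$, which is exactly the claim.

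Concretely, I would build the slice tower of $\Sigma^n H\uM$ by successively coning off the smallest-index cells. For each $H\leq G$ with $|H|=a$, the piece of $\uM(G/H)$ consisting of classes that restrict to zero on every $J\subsetneq H$ with $|J|<a$ is detected by a bouquet of slices in a specific range of dimensions; this assembles into the filtration $\mathcal{F}^k\uM$ and matches the identification $\underline{\pi}_{t-s}(P^t_t X)\cong F^s\underline{\pi}_{t-s}(X)/F^{s+1}\underline{\pi}_{t-s}(X)$ through the Ullman reformulation of the slice filtration. This is essentially \cite[Section 5]{Ull13}, which I would cite for the technical backbone.

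The main obstacle is the bookkeeping of indices: matching the homological $(s,t)$-grading of the slice spectral sequence with the slice connectivity estimates provided by Theorem \ref{connective}, so that the exponent $(s+n-1)/n$ appears with the correct floor/ceiling convention. A careful induction on $|H|$, combined with the fact that $\Sigma^{k\rho_G}$-suspensions shift slice connectivity by $k|G|$ (Proposition \ref{regular}), should resolve this; the rest is essentially a direct translation of Ullman's description into the Mackey-functor language used here.
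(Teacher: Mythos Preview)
The paper does not give its own proof of this proposition: it simply refers the reader to \cite[Corollary 8.6]{Ull13}. So there is nothing substantive to compare against; the paper treats this as a black-box citation, whereas you attempt an actual sketch of the argument.

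Your sketch is in the right spirit (reduce to a suspended Eilenberg--MacLane spectrum, then read off the slice filtration on the top homotopy Mackey functor via the orbit-size thresholds coming from the slice cells), and indeed this is roughly how Ullman's proof proceeds. But a couple of points deserve care. First, your reduction ``replace $X$ by $\Sigma^n H\underline{\pi}_n X$'' is not literally valid: an $(n+1)$-coconnected spectrum may have nontrivial homotopy in degrees $<n$, and these can in principle interact with the slice tower. What one actually uses is that the map $X\to P^{\leq n}X$ and the Postnikov truncation to the top group induce isomorphisms on the specific filtration pieces in question; this requires the connectivity estimates for slice cells, not just a hand-wave about ``the relevant range.'' Second, note that the displayed statement has $\underline{\pi}_t$ on the left and $\underline{\pi}_n$ on the right; the intended case is $t=n$ (the top degree), and your sketch implicitly assumes this without flagging it. Finally, you cite \cite[Section 5]{Ull13} for the backbone, but the precise reference the paper uses is Corollary 8.6; if you want to expand the citation into an argument, you should track the indexing conventions there rather than redo the bookkeeping from scratch.
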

The restrictions and transfers are induced from $\uM.$
\begin{proof}
See \cite[Corollary 8.6]{Ull13} for the proof.
\end{proof}

By repeated application of the map $u_{(\xi-\xi^p)}$ in Lemma \ref{HK} and then smashing with the sphere $S^V$ we obtain a map $S^{V} \wedge H\uZ \to S^{V+l(\xi-\xi^p)}\wedge H\uZ$, denoted by $u_{l(\xi-\xi^p)}.$ Its cofiber is denoted by $Cof(u_{l(\xi-\xi^p)}).$ Then we obtain

\begin{prop}\label{cofiber}
The spectrum $Cof(u_{l(\xi-\xi^p)})$ is a wedge of suspensions of $H\KK_p\bZp$. Therefore, it has only $np$-slices for integer $n$.
\end{prop}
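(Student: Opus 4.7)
The plan is to prove this by induction on $l$, first identifying the cofiber explicitly and then deducing the slice structure. For the base case $l=1$, I would use Lemma \ref{HK} to identify $H\uZ \simeq \Sigma^{\xi-\xi^p}H\KK_pL_p$, so that $u_{\xi-\xi^p}$ is induced by the Mackey functor map $\KK_pL_p \to \uZ$ obtained by tensoring the $C_p$-Mackey functor map $L_p \to R_p$ (identity on $C_p/e$, multiplication by $p$ on $C_p/C_p$) with the constant $C_q$-Mackey functor $R_q$. The short exact sequence $0 \to L_p \to R_p \to \bZp \to 0$ of $C_p$-Mackey functors then yields $0 \to \KK_pL_p \to \uZ \to \KK_p\bZp \to 0$, and passing to Eilenberg--MacLane spectra and smashing with $S^V$ identifies $Cof(u_{\xi-\xi^p}) \simeq \Sigma^{V+\xi-\xi^p}H\KK_p\bZp$.

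For the inductive step, I would write $u_{l(\xi-\xi^p)}$ as the composite $u_{\xi-\xi^p} \circ u_{(l-1)(\xi-\xi^p)}$. The cofiber sequence associated to a composition yields the triangle
\[
Cof(u_{(l-1)(\xi-\xi^p)}) \to Cof(u_{l(\xi-\xi^p)}) \to \Sigma^{V+l(\xi-\xi^p)}H\KK_p\bZp.
\]
By induction, the left-hand term is a wedge of suspensions of $H\KK_p\bZp$, so $Cof(u_{l(\xi-\xi^p)})$ is an extension of one such suspension by a wedge of them. To conclude that this extension splits (making the middle term an actual wedge), one must show the connecting map $\Sigma^{V+l(\xi-\xi^p)}H\KK_p\bZp \to \Sigma \cdot Cof(u_{(l-1)(\xi-\xi^p)})$ is null. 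By the induction hypothesis this reduces to verifying that $[H\KK_p\bZp, \Sigma^\gamma H\KK_p\bZp]^G = 0$ for the relevant shifts $\gamma$ (integer and multiple of $\xi-\xi^p$). These mapping groups identify with $\uH^\gamma_G(H\KK_p\bZp;\KK_p\bZp)$, which can be read off from Theorem \ref{compr} together with the explicit form of $\KK_p\bZp$: it is $p$-torsion and supported only on orbits containing $C_p$.

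For the slice assertion, once the first part is established, it suffices to show each $\Sigma^\alpha H\KK_p\bZp$ is an $np$-slice for some $n \in \Z$. Since $\KK_p\bZp$ vanishes on $C_{pq}/e$ and $C_{pq}/C_q$, the geometric fixed points $\Phi^e(H\KK_p\bZp)$ and $\Phi^{C_q}(H\KK_p\bZp)$ are contractible; moreover, the transfer $tr^G_{C_p}\colon \KK_p\bZp(G/C_p)\to \KK_p\bZp(G/G)$ is multiplication by $q$ on $\Z/p$, which is a unit, so $\Phi^{C_{pq}}(H\KK_p\bZp)$ contributes trivially in the relevant degrees as well. Applying Theorem \ref{connective} together with the coconnective definition to $\Sigma^\alpha H\KK_p\bZp$ then forces its slice dimension to lie in $p\Z$, since the only non-trivial geometric fixed point locus is $H=C_p$, and the bound $n/|H|=n/p$ from Theorem \ref{connective} can only be saturated at integer multiples of $p$.

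The principal obstacle is the splitting in the inductive step: namely, verifying vanishing of the Ext-type groups $[H\KK_p\bZp,\Sigma^\gamma H\KK_p\bZp]^G$ for every index $\gamma$ that arises. Organising the induction so that only a controlled set of such $\gamma$'s appears, and then explicitly ruling out non-trivial $k$-invariants using Theorem \ref{compr}, is where the real work lies. Once this is handled, the slice bookkeeping via geometric fixed points is routine.
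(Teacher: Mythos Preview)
Your approach is genuinely different from the paper's, and in one place it has a real gap.

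\textbf{For the wedge decomposition.} The paper does not induct on $l$. Instead it observes that $res^G_{C_q}\bigl(Cof(u_{l(\xi-\xi^p)})\bigr)$ is contractible (since $u_{l(\xi-\xi^p)}$ restricts to an equivalence over $C_q$), and that the cofiber is an $H\uZ$-module and hence cohomological. Proposition~\ref{cohmac} then reduces the computation of $\underline{\pi}_n$ entirely to the $C_p$-restriction, where the cofiber of $u_{l(\xi-\xi^p)}$ is easily seen to have $\underline{\pi}_n \cong \langle\Z/p\rangle$ exactly for $n\in\{|V^{C_p}|-2,\dots,|V^{C_p}|-2l\}$; the $C_{pq}$-Mackey functor is then forced to be $\KK_p\langle\Z/p\rangle$ in each of those degrees. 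This is quicker than your inductive cofiber-of-composition argument, and it avoids having to control the specific shifts $\gamma$ that arise. Your appeal to Theorem~\ref{compr} to kill the $k$-invariants is not quite right as stated: that theorem computes $\uH^\bigstar_G(S^0;\uZ)$, not $[H\KK_p\langle\Z/p\rangle,\Sigma^\gamma H\KK_p\langle\Z/p\rangle]^G$, so you would need a separate computation with $\KK_p\langle\Z/p\rangle$-coefficients. (The paper is also silent on why knowing all $\underline{\pi}_n\cong\KK_p\langle\Z/p\rangle$ forces a wedge splitting, so neither argument is fully explicit on this point.)

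\textbf{For the slice assertion.} Here your sketch has a genuine gap. Theorem~\ref{connective} only tells you when $X\geq n$; knowing that $\Phi^H$ vanishes for $H\neq C_p$ lets you conclude $\Sigma^n H\KK_p\langle\Z/p\rangle\geq np$, but it does \emph{not} by itself force the slice dimension to lie in $p\Z$, and the coconnective bound $X\leq np$ requires checking $[S^{k\rho_H},X]^H=0$ for all $H\leq G$ and $k|H|>np$, which is not a geometric-fixed-point statement. The paper handles this instead via Lemma~\ref{Kpslice}, which uses Ullman's algebraic filtration (Proposition~\ref{algfil}): one computes $\mathcal{F}^k\KK_p\langle\Z/p\rangle$ directly from the Mackey functor, finds that the jump occurs precisely at $k=p$, and reads off $s=n(p-1)$, hence $t=np$. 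That argument is short and pins down the slice exactly, whereas your geometric-fixed-point outline would still need the upper bound.
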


\begin{proof}
Note that the restriction of the cofiber to the subgroup $C_q$ is trivial. Since the cofiber is again $H\uZ$-module, it is cohomological. Using Proposition \ref{cohmac}, we conclude that $\underline{\pi}_n Cof(u_{l(\xi-\xi^p)})$ is non-zero if and only if $\underline{\pi}_n res_{C_p}(Cof(u_{l(\xi-\xi^p)}))$ is non-zero. Now, set $V = a+b \xi +c \xi^p + d \xi^q$ with $a, b,c$ and $d$ are non-negative integers. Therefore, $\underline{\pi}_n(Cof(u_{l(\xi-\xi^p)}))$ is non-trivial if and only if $n \in \{ a+2c-2, \cdots, a+2c-2l\}$ and for each such $n$, the homotopy group Mackey functor $\underline{\pi}_n Cof(u_{l(\xi-\xi^p)})$ is $\KK_p\bZp$. It remains to show that the spectrum $\Sigma^n H\KK_p\bZp$ has only $np$-slice, using the following Lemma \ref{Kpslice}.
\end{proof}

Note that we have the following equivalences:    

$$S^\xi \wedge H\KK_p\bZp \simeq \ast, S^{\xi^q} \wedge H\KK_p\bZp \simeq \ast \text{ and } S^{\xi^p} \wedge H\KK_p\bZp \simeq \Sigma^2 H\KK_p\bZp.$$

Therefore, for any $k \geq 0,$ we have $\Sigma^{k\rho_G} \wedge H\KK_p\bZp \simeq \Sigma^{kq} H\KK_p\bZp.$ Hence, by Proposition \ref{regular}, the only nontrivial slice of $\Sigma^{kq} H\KK_p\bZp$ is the $kpq$-slice $\Sigma^{kq} H\KK_p\bZp$. This gives the clue for the slices of $\Sigma^n H\KK_p\bZp$. 

\begin{lemma}\label{Kpslice}
The spectrum $\Sigma^{n} H\KK_p\bZp$ is a $pn$-slice.
\end{lemma}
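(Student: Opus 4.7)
The plan is to prove the lemma by verifying the two conditions that define a $pn$-slice: slice connectivity $\Sigma^n H\KK_p\bZp \geq pn$ and slice coconnectivity $\Sigma^n H\KK_p\bZp \leq pn$.

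For slice connectivity, Theorem \ref{connective} reduces matters to showing $\pi_k \Phi^H(\Sigma^n H\KK_p\bZp) = 0$ for $k < pn/|H|$ and every subgroup $H \leq C_{pq}$. The key input is a calculation of the geometric fixed points of $H\KK_p\bZp$ from the Mackey-functor data: the underlying spectrum and the restriction to $C_q$ are contractible since $\KK_p\bZp(G/e) = \KK_p\bZp(G/C_q) = 0$; the $C_p$-geometric fixed point is connective with $\pi_0 = \Z/p$ (coming from the nonzero value at $G/C_p$); and the $C_{pq}$-geometric fixed point is contractible because the transfer $\KK_p\bZp(G/C_p) \to \KK_p\bZp(G/G)$ equals multiplication by $q$, which is invertible modulo $p$. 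After shifting by $n$, the only nontrivial geometric fixed point is concentrated in degree $n$, which satisfies $k < pn/|H|$ for both $H = C_p$ (bound $n$) and $H = C_{pq}$ (bound $n/q$).

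For slice coconnectivity I would check the two parts of the definition separately. The mapping condition $[S^{k\rho_G}, \Sigma^n H\KK_p\bZp]^G = 0$ for $k > n/q$ follows directly from the equivalence $S^{\rho_G} \wedge H\KK_p\bZp \simeq \Sigma^q H\KK_p\bZp$ recorded just above the lemma: the mapping group rewrites as $\underline{\pi}_{kq-n}(H\KK_p\bZp)(G/G)$, which vanishes unless $kq = n$, and the latter cannot happen when $kq > n$. The restriction $\mathrm{res}^G_{C_q}$ is contractible so its coconnectivity is vacuous, while $\mathrm{res}^G_{C_p}(\Sigma^n H\KK_p\bZp) = \Sigma^n H\bZp$ as a $C_p$-spectrum. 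For this $C_p$-spectrum I would run the parallel argument: the mapping condition $[S^{k\rho_{C_p}}, \Sigma^n H\bZp]^{C_p} = 0$ for $k > n$ reduces to the vanishing of $\tilde{H}^0(S^{k\rho_{C_p}-n}; \bZp)(C_p/C_p)$, which holds because in this range every fixed-point dimension of $k\rho_{C_p}-n$ is positive, so the representation sphere is equivariantly $0$-connected and its reduced zeroth cohomology vanishes. Combining, $\Sigma^n H\KK_p\bZp$ is both $\geq pn$ and $\leq pn$, hence a $pn$-slice. For negative $n$ one may run the same argument, or alternatively write $n = kq + r$ with $0 \leq r < q$ and apply Proposition \ref{regular} to the $pr$-slice $\Sigma^r H\KK_p\bZp$ shifted by $k\rho_G$.

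The main obstacle I anticipate is the restriction-to-$C_p$ step, because it effectively reintroduces the analogous slice question one level down; the cure is that the equivariant $0$-connectivity of the representation sphere $S^{k\rho_{C_p}-n}$ for $k > n$ gives a clean, non-circular argument for the needed vanishing of $\tilde{H}^0$, so this step is reliable but has to be set up carefully.
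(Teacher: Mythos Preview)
Your argument is essentially correct and takes a genuinely different route from the paper.  The paper does not verify the two slice inequalities directly; instead it applies Ullman's filtration result (Proposition~\ref{algfil}) to the spectrum $X=\Sigma^n H\KK_p\bZp$ itself, observes that the algebraic filtration $\mathcal{F}^k\KK_p\bZp$ has exactly one jump (at $k=p$), and reads off from $F^s\underline{\pi}_n X \cong \mathcal{F}^{(s+n-1)/n}\KK_p\bZp$ that the unique nontrivial slice occurs at $t=n+s=np$.  Your approach is more hands-on: you check $\geq pn$ via the geometric-fixed-point criterion of Theorem~\ref{connective} and $\leq pn$ by unwinding the recursive definition.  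The advantage of your method is that it is self-contained and does not invoke the somewhat heavy Corollary~8.6 of Ullman; the advantage of the paper's method is that once Proposition~\ref{algfil} is in hand, the computation reduces to a one-line inspection of where the Mackey functor $\KK_p\bZp$ is supported.

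Two small points on your write-up.  First, the sentence ``the $C_{pq}$-geometric fixed point is contractible because the transfer \ldots\ is invertible'' only gives $\pi_0\Phi^G=0$; to conclude $\Phi^G(H\KK_p\bZp)\simeq *$ you should either invoke the periodicity $\Sigma^{\bar\rho_G}H\KK_p\bZp\simeq\Sigma^{q-1}H\KK_p\bZp$ together with connectivity, or simply note that for the connectivity bound $k<pn/|G|=n/q$ (with $n\geq 0$) it suffices that $\Phi^G(H\uM)$ is connective, which holds for every Mackey functor.  Second, your reduction of the $C_p$-coconnectivity to $\tilde H^0$ of a genuinely connected representation sphere is clean and correct; just make explicit that for $k>n$ the virtual representation $k\rho_{C_p}-n$ is honest with $|(k\rho_{C_p}-n)^{C_p}|=k-n>0$, so the vanishing is immediate.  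With these clarifications your proof stands on its own.
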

\begin{proof}
If $\Sigma^{n} H\KK_p\bZp = P^t_t(X)$ for some $t$, then by the filtration of the homotopy groups we have $F^{s-1} \underline{\pi}_n \Sigma^n H\KK_p\bZp \neq F^s \underline{\pi}_n \Sigma^n H\KK_p\bZp$. Then, theorem \ref{algfil} yields $\mathcal{F}^{(s+n)/n} \KK_p\bZp \neq \mathcal{F}^{(s+n-1)/n} \KK_p\bZp.$ This can only happen when 
$$(s+n-1)/n < p \text{ and }(s+n)/n\geq p.$$ This gives $s= n(p-1).$ But note that $F^{s-1} \underline{\pi}_n \Sigma^n H\KK_p\bZp$ is empty and $F^{s} \underline{\pi}_n \Sigma^n H\KK_p\bZp \cong \KK_p \bZp.$ This gives $t-s=n$ and hence $t= np.$ Therefore, $\Sigma^{n} H\KK_p\bZp$ has only $np$-slice.
\end{proof}
\begin{prop}\label{res}
Let $X$ be a $G$-spectrum such that $\underline{\pi}_\bigstar(X)$ is cohomological. Then, $X$ is a $k$-slice if and only if both $res^G_{C_p}(X)$ and $res^G_{C_q}(X)$ are $k$-slices.
\end{prop}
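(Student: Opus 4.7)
The plan is to prove both implications using the characterizations from Theorem \ref{connective} and the definition of $\leq k$ directly.

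For the forward direction, $X$ being a $k$-slice means $k \leq X \leq k$, and by the application of \cite[Corollary 2.6]{Hil12} quoted just before Theorem \ref{connective}, this double-sided condition is preserved under restriction to any subgroup. Hence $res^G_{C_p}(X)$ and $res^G_{C_q}(X)$ are each $k$-slices.

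For the reverse direction, suppose both restrictions are $k$-slices; I would verify $X \geq k$ and $X \leq k$ separately. For $X \geq k$, I invoke Theorem \ref{connective}: I must check $\pi_m \Phi^H(X) = 0$ for every subgroup $H \leq G = C_{pq}$ and $m < k/|H|$. Since geometric fixed points are preserved by restriction, the vanishing for $H \in \{e, C_p\}$ follows from $res^G_{C_p}(X) \geq k$, while that for $H \in \{e, C_q\}$ follows from $res^G_{C_q}(X) \geq k$. The remaining case $H = G$ is vacuous by the remark immediately following Theorem \ref{connective}, which asserts that $\Phi^{C_{pq}}(X)$ is trivial under the standing cohomological hypothesis.

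For $X \leq k$, the definition demands $res^G_H(X) \leq k$ for every proper $H$ (immediate from the hypothesis) together with $[S^{j\rho_G}, X]^G = 0$ whenever $j\cdot pq > k$. To establish the latter I study the Mackey functor $\underline{\pi}_{j\rho_G}(X)$. Using the decompositions $\rho_G|_{C_p} \cong q\,\rho_{C_p}$ and $\rho_G|_{C_q} \cong p\,\rho_{C_q}$, one computes
$$\underline{\pi}_{j\rho_G}(X)(G/C_p) = [S^{jq\rho_{C_p}}, res^G_{C_p}(X)]^{C_p}, \quad \underline{\pi}_{j\rho_G}(X)(G/C_q) = [S^{jp\rho_{C_q}}, res^G_{C_q}(X)]^{C_q},$$
both of which vanish as soon as $j\cdot pq > k$, because the restrictions are assumed to be $\leq k$. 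Since $\underline{\pi}_\bigstar(X)$ is cohomological by hypothesis, Proposition \ref{cohmac} applies to $\underline{\pi}_{j\rho_G}(X)$ and forces $[S^{j\rho_G}, X]^G = \underline{\pi}_{j\rho_G}(X)(G/G) = 0$.

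The only mildly nontrivial step is the coconnective direction: the cohomological hypothesis is precisely what lets one promote the vanishing at $G/C_p$ and $G/C_q$ to vanishing at $G/G$, via Proposition \ref{cohmac}. Without it the reverse implication could fail, since a general Mackey functor may be nonzero at $G/G$ despite being zero at both intermediate orbits.
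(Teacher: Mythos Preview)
Your proof is correct and follows essentially the same approach as the paper: the forward direction via preservation of slice bounds under restriction (the paper cites \cite[Proposition~4.13]{HHR16}, you cite \cite[Corollary~2.6]{Hil12}, same content), and the reverse direction via Proposition~\ref{cohmac}. The paper's argument is a one-line invocation of Proposition~\ref{cohmac}; you have unpacked this into separate verifications of $X\geq k$ (via Theorem~\ref{connective} and the remark on $\Phi^{C_{pq}}$) and $X\leq k$ (via the definition and Proposition~\ref{cohmac} applied to $\underline{\pi}_{j\rho_G}(X)$), which makes the role of the cohomological hypothesis explicit.
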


\begin{proof}
Assume both $res^G_{C_p}(X)$ and $res^G_{C_q}(X)$ are $k$-slices. Then, by Proposition \ref{cohmac} $X$ is a $k$-slice for the group $G.$ The other direction follows from \cite[Proposition 4.13]{HHR16}.
\end{proof}

This Proposition suggests that computation of the $C_p$-slices is relevant to the calculations of the $C_{pq}$-slice for spectra which are modules over $H \uZ.$ An simple example of a $C_p$-$H\uZ$-module is the spectrum $S^W \wedge H\uZ,$ where $W$ is a representation of $C_p.$ Though the slices of this spectrum are known from \cite{Yar17}, we again compute the slices and obtain

\begin{lemma}\label{cpslice}
Let $W= m+n \xi$ be a representation of $C_p.$ Then the spectrum $\Sigma^W H \uZ$ is $C_p$-$dim(W)$-slice if and only if 

(i) $n \leq m \leq n+4$ for $p =3.$

(ii) $\frac{2n}{p-1} \leq m \leq \frac{2n+3p}{p-1}$ for $p \geq 5.$
\end{lemma}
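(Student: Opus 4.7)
The strategy is to verify, for $N = \dim(W) = m+2n$, both the slice connectivity $\Sigma^W H\uZ \geq N$ and the slice coconnectivity $\Sigma^W H\uZ \leq N$; since $C_p$ has only the two subgroups $e$ and $C_p$, each condition reduces to essentially one computation.

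For connectivity I apply Theorem \ref{connective}. At the trivial subgroup the condition is automatic because $\Phi^e(\Sigma^W H\uZ) \simeq \Sigma^N H\mathbb{Z}$. At $H = C_p$ one has
$$\Phi^{C_p}(\Sigma^W H\uZ) \simeq \Sigma^m \Phi^{C_p}(H\uZ),$$
and since $H\uZ$ is a $0$-slice, Theorem \ref{connective} (applied to $H\uZ$) forces $\Phi^{C_p}(H\uZ)$ to be connective, with $\pi_0 = \mathbb{Z}/p$ nontrivial. Consequently $\Sigma^m \Phi^{C_p}(H\uZ)$ has its first nonvanishing homotopy group in degree $m$, and the required vanishing $\pi_j = 0$ for $j < N/p$ becomes $m \geq N/p$, equivalently $m(p-1) \geq 2n$. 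This is exactly the lower bound in both cases: $m \geq n$ when $p=3$ and $m \geq 2n/(p-1)$ when $p \geq 5$.

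For coconnectivity I verify $[S^{k\rho_p}, \Sigma^W H\uZ]^{C_p} \cong \tilde H^{W - k\rho_p}_{C_p}(S^0; \uZ) = 0$ for every $k \geq 0$ with $kp > N$. Writing $W - k\rho_p = (m-k) + (n - k(p-1)/2)\xi$ with $\dim(W - k\rho_p) = N - kp < 0$, I read off the vanishing region from the Lewis--Stong computation of the $RO(C_p)$-graded cohomology of $S^0$ in \cite{Lew88}. The critical values of $k$ are those just above $N/p$, sitting precisely on the edge of the Lewis--Stong chart; forcing vanishing there produces the upper inequality. Numerically this yields $m \leq n+4$ for $p = 3$ (using that $(2n+3p)/(p-1) = n + 4.5$ when $p=3$, which rounds to $n+4$ for integer $m$) and $m \leq (2n+3p)/(p-1)$ for $p \geq 5$. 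The qualitative discrepancy between the two cases reflects that $\rho_3 = 1+\xi$ contains only a single copy of $\xi$, whereas $\rho_p$ carries $(p-1)/2 \geq 2$ copies for $p \geq 5$.

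The main obstacle is pinning down the boundary between the vanishing and non-vanishing regions of the Lewis--Stong chart, since small off-by-one shifts would alter the tightness of the upper bound and must be handled with care for the $p = 3$ case separately. A cleaner alternative is to exploit the $\rho_p$-periodicity of Proposition \ref{regular}: the pure $N$-slice property for $\Sigma^W H\uZ$ is preserved under $W \mapsto W + \rho_p$ (with $N$ replaced by $N + p$), so one may first translate $W$ into a fundamental domain for $\rho_p$-translation and then check the slice level on a small finite list of representatives directly against the Lewis--Stong tables.
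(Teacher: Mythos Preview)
Your approach is essentially the same as the paper's: both use Theorem~\ref{connective} (geometric fixed points) for the lower bound $\Sigma^W H\uZ \geq \dim(W)$ and the Lewis--Stong computation of $\tilde H^{W-k\rho_{C_p}}_{C_p}(S^0;\uZ)$ for the upper bound $\Sigma^W H\uZ \leq \dim(W)$. The only organizational difference is in the handling of $p=3$: the paper takes as its \emph{primary} argument the $\rho_{C_3}$-periodicity reduction (your ``cleaner alternative''), writing $S^W\wedge H\uZ \simeq S^{n\rho_{C_3}}\wedge \Sigma^{m-n}H\uZ$ and then checking directly from \cite{Lew88} that $\Sigma^{m-n}H\uZ$ is an $(m-n)$-slice exactly when $0\leq m-n\leq 4$; you instead try to extract the $p=3$ bound from the general inequality by rounding $(2n+3p)/(p-1)=n+4.5$ down to $n+4$. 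That rounding step is where the delicate integer bookkeeping lives, and the paper sidesteps it entirely by invoking periodicity, which is precisely the shortcut you flag at the end.
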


\begin{proof}

For case (i), we have the regular representation, $\rho_{C_3} = 1 + \xi.$ Therefore, we have $S^W \wedge H\uZ \cong S^{n\rho_{C_3}} \wedge\Sigma^{m-n}H\uZ.$ So, the problem reduces to case when $\Sigma^{m-n}H\uZ$ is an $(m-n)$-slice.

It is immediate that $\Sigma^{m-n}H\uZ \geq m-n.$ By \cite[Theorem 4.3]{Lew88}, it follows that if $0 \leq m-n \leq 4$, then $\Sigma^{m-n}H\uZ \leq m-n.$

Next, consider case (ii): To show that $\Sigma^W H \uZ \geq dim(W)$, Theorem \ref{connective} implies that it is enough to compute the homotopy groups $\pi_k(\Phi^{C_p} \Sigma^W H\uZ)$ and $\pi_k(S^{dim(W)}\wedge H\uZ).$ It turns out that 

$$\pi_k(\Phi^{C_p} \Sigma^W H\uZ) =0 \text{ for } k< \frac{mp}{p}$$

and 
$$\pi_k(S^{dim(W)}\wedge H\uZ) =0 \text{ for } k < dim(W).$$ 

If $mp < dim(W),$ then $m=min \{ k : \underline{\pi}_k(S^{dim(W)}\wedge H\uZ) \neq 0\}$. Therefore, $\Sigma^W H\uZ$ is not in $dim(W)$-slice category. Therefore, $mp \geq dim(W)= m +2n$.

Next, we find conditions which ensure that such that $\Sigma^W H\uZ \leq dim(W).$ That is,  we must show that 

$$[S^{k\rho_{C_p}}, \Sigma^W H\uZ]^{C_p}\cong \tilde{H}^{W-k\rho_{C_p}}_{C_p}(S^0; \uZ)=0 \text{ for } k > \frac{dim(W)}{p}.$$
Following \cite[Theorem 4.3]{Lew88}, this group is non zero if and only if $m -k \geq 3$. Therefore, there exists some $k$ such that the group $\tilde{H}^{W-k\rho_{C_p}}_{C_p}(S^0; \uZ)$ vanishes if and only if $m > \frac{2n +3p}{p-1}.$ This gives $S^W \wedge H\uZ \leq dim(W)$ if and only if for all $k \geq 0,$ $m \leq \frac{2n +3p}{p-1}.$ Hence the result follows.   
\end{proof}

\begin{remark}\label{rmkyar}
For any $C_p$-spectrum $S^W \wedge H\uZ$, we have either $S^W \wedge H \uZ \leq |W|$ or $\geq |W|.$ In any case, in \cite{Yar17}, Yarnall proved that $S^W \wedge H\uZ$ has a spherical $dim(V)$-slice.
\end{remark}
\begin{cor}
Let $V$ be a $C_{pq}$-representation such that $V = a+ b \xi+ c\xi^p +d\xi^q.$ Then the spectrum $\Sigma^V H\uZ$ is a $C_{pq}$-dim($V$)-slice if  

i) $b+d \leq a+2c \leq b+d+4$ for $p=3$ or $\frac{2(b+d)}{p-1} \leq a+2c \leq \frac{2(b+d)+3p}{p-1}$ for $p\geq 5.$

ii) $\frac{2(b+c)}{q-1} \leq a+2d \leq \frac{2(b+c)+3q}{q-1}.$ 
\end{cor}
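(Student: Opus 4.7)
The plan is to apply Proposition \ref{res} to reduce the claim to two separate computations of $C_p$- and $C_q$-slices. Since $S^V \wedge H\uZ$ is a module over $H\uZ$, all of its homotopy Mackey functors are cohomological, so the hypothesis of Proposition \ref{res} is satisfied. Hence it will be enough to verify that both restrictions $res^G_{C_p}(S^V \wedge H\uZ)$ and $res^G_{C_q}(S^V \wedge H\uZ)$ are $\dim(V)$-slices as $C_p$- and $C_q$-spectra respectively.

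Next I would identify these restrictions with shifted $H\uZ$'s of the form considered in Lemma \ref{cpslice}. The representation $\xi^p$ is pulled back from $C_{pq}/C_p \cong C_q$, so $\xi^p|_{C_p}$ is trivial of real dimension $2$; the representation $\xi^q|_{C_p}$ is a non-trivial $2$-dimensional real representation of $C_p$, and a $C_p$-analogue of Lemma \ref{HK}(1) gives $S^{\xi^q} \wedge H\uZ \simeq S^{\xi} \wedge H\uZ$ as $C_p$-spectra. Consequently $res^G_{C_p}(S^V \wedge H\uZ) \simeq S^{W_p} \wedge H\uZ$ with $W_p = (a+2c) + (b+d)\xi$ as a $C_p$-representation, and symmetrically $res^G_{C_q}(S^V \wedge H\uZ) \simeq S^{W_q} \wedge H\uZ$ with $W_q = (a+2d) + (b+c)\xi$ as a $C_q$-representation. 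Note that $\dim(W_p) = \dim(W_q) = \dim(V)$, so the dimensions line up correctly.

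Finally, I would directly invoke Lemma \ref{cpslice}. Writing $W_p = m + n\xi$ with $m = a+2c$ and $n = b+d$, hypothesis (i) of the corollary reads exactly as the condition of Lemma \ref{cpslice}(i) when $p=3$ and of Lemma \ref{cpslice}(ii) when $p \geq 5$, so it ensures that $res^G_{C_p}(S^V \wedge H\uZ)$ is a $C_p$-$\dim(V)$-slice. Since $q \geq 5$ (as $q > p$ are both odd primes), hypothesis (ii) of the corollary is exactly the condition of Lemma \ref{cpslice}(ii) for $W_q$, yielding the analogous conclusion for $C_q$. Assembling these via Proposition \ref{res} finishes the argument.

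The main subtlety to be checked carefully is the equivalence $S^{\xi^q} \wedge H\uZ \simeq S^{\xi} \wedge H\uZ$ as $C_p$-spectra (and its counterpart with the roles of $p$ and $q$ swapped); this is the $C_p$-level analogue of Lemma \ref{HK}(1) and is what justifies packaging each restricted representation in the clean form $m + n\xi$ demanded by Lemma \ref{cpslice}. Once this is in hand, the remainder is just bookkeeping of dimensions and matching inequalities.
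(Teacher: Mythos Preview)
Your proposal is correct and follows essentially the same approach as the paper: reduce via Proposition \ref{res} to the restrictions at $C_p$ and $C_q$, and then invoke Lemma \ref{cpslice}. You supply more detail than the paper does (in particular, the explicit identification of the restricted spectra as $S^{(a+2c)+(b+d)\xi}\wedge H\uZ$ and $S^{(a+2d)+(b+c)\xi}\wedge H\uZ$ and the matching of inequalities), but the logical skeleton is identical.
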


\begin{proof}
If $\Sigma^V H\uZ$ is slice, it must be a $dim(V)$-slice. Using Proposition \ref{res}, it is enough to show that  both the spectra $res^G_{C_p}(\Sigma^V H \uZ)$ and $res^G_{C_q}(\Sigma^V H \uZ)$ are $dim(V)$-slices. Hence, the result follows from Lemma \ref{cpslice}.
\end{proof}

\begin{prop}\label{slice}
For the $C_{pq}$-spectrum $S^V \wedge H\uZ$ we have 

(i) if $res^G_{C_p}(S^V \wedge H \uZ) \leq dim(V)$ and $res^G_{C_q}(S^V \wedge H \uZ) \leq dim(V),$ then it has a spherical dim(V)-slice Also, for $0 \leq k \leq dim(V),$ the $k$-slices of $S^V \wedge H\uZ$ are given by the certain wedges of the suspensions of $H \KK_p\bZp$ or $H\KK_q \bZq$ or the both.

(ii) if $res^G_{C_p}(S^V \wedge H \uZ) \geq  dim(V)$ and $res^G_{C_q}(S^V \wedge H \uZ)\geq dim(V),$ then it has a spherical $dim(V)$-slice. 

(iii) if $res^G_{C_p}(S^V \wedge H \uZ) \geq dim(V)$ and $res^G_{C_q}(S^V \wedge H \uZ) \leq dim(V),$ then it has a spherical $dim(V)$-slice. 

(iv) if $res^G_{C_p}(S^V \wedge H \uZ) \leq dim(V)$ and $res^G_{C_q}(S^V \wedge H \uZ) \geq dim(V),$ then it has a spherical $dim(V)$-slice. 

\end{prop}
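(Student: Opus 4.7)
The plan is, in each of the four cases, to exhibit a virtual representation $W \in RO(C_{pq})$ with $\dim W = \dim V$ such that $S^W \wedge H\uZ$ is itself a $C_{pq}$-$\dim(V)$-slice (by the preceding Corollary, using Lemma \ref{cpslice} and Proposition \ref{res}), and then to compare $S^V \wedge H\uZ$ with $S^W \wedge H\uZ$ through a finite zig-zag of cofiber triangles whose cofibers are wedges of suspensions of $H\KK_p\bZp$ or $H\KK_q\bZq$, as provided by Lemma \ref{HK} and Proposition \ref{cofiber}. Since the slice functors $P^n_n$ preserve cofiber triangles, such a comparison transfers the slice structure of $S^W \wedge H\uZ$ to that of $S^V \wedge H\uZ$, modulo the contributions of the cofibers.

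To construct $W$, write $V = a + b\xi + c\xi^p + d\xi^q$ and consider $V' = V + l(\xi - \xi^p) + k(\xi - \xi^q)$, which preserves $\dim V$ because $\xi, \xi^p, \xi^q$ all have dimension $2$. Using the restriction identities $\xi|_{C_p} = \xi^q|_{C_p} = \xi'$, $\xi^p|_{C_p} = 2$ and the analogs for $C_q$, one finds
\[
\mathrm{res}^G_{C_p}(V') = (a+2c-2l) + (b+d+l)\xi', \quad \mathrm{res}^G_{C_q}(V') = (a+2d-2k) + (b+c+k)\xi'',
\]
so $l$ adjusts only the $C_p$-restriction and $k$ only the $C_q$-restriction. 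Projected onto the constant-$\dim$ line, the slice window of Lemma \ref{cpslice} is an interval of length $3$ in the trivial coordinate, which advances by $\pm 2$ per unit of $l$ (resp.\ $k$); thus some integer $l$ (and independently some $k$) lands each restriction in its window. The sign of the shifts is dictated by the case: in (ii) both restrictions sit above their windows, so one pushes down with $l, k > 0$ using $u_{\xi-\xi^p}$ and $u_{\xi-\xi^q}$ from Lemma \ref{HK}; in (i) they sit below, requiring $l, k < 0$, i.e.\ using the degree-$p$ (resp.\ degree-$q$) maps $a_{\xi^p}/a_\xi$ and $a_{\xi^q}/a_\xi$ from the proof of Proposition \ref{relation}(3); cases (iii) and (iv) mix the two signs. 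By Proposition \ref{res}, $S^W \wedge H\uZ$ is then a $C_{pq}$-$\dim(V)$-slice, giving the required spherical $\dim(V)$-slice.

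The comparison of $S^V \wedge H\uZ$ with $S^W \wedge H\uZ$ is done by iterating the chosen shifts step by step. Each step sits in a cofiber triangle whose cofiber has a non-equivariantly trivial restriction to $C_q$ (when shifting in the $\xi - \xi^p$ direction) or to $C_p$ (in the $\xi-\xi^q$ direction); the same argument as in Proposition \ref{cofiber}, combined with the cohomological vanishing criterion of Proposition \ref{cohmac}, shows that every such cofiber is a wedge of suspensions of $H\KK_p\bZp$ or $H\KK_q\bZq$. By Lemma \ref{Kpslice} those wedges have slices concentrated in degrees that are multiples of $p$ or $q$, each slice being again a wedge of the same form. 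Applying $P^n_n$ to the zig-zag and inducting on the steps, one concludes for $n \neq \dim V$ that $P^n_n(S^V \wedge H\uZ)$ vanishes unless $n$ is a multiple of $p$ or $q$, in which case it is an iterated extension of $H\KK$-wedges and hence itself such a wedge, establishing the additional claim of (i).

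The principal obstacle is identifying $P^{\dim V}_{\dim V}(S^V \wedge H\uZ)$ with the spherical $S^W \wedge H\uZ$ when $\dim V$ itself happens to be a multiple of $p$ or $q$; in that case the cofibers of the zig-zag also contribute slices at degree $\dim V$, and the $\dim(V)$-slice is a priori only an extension of $S^W \wedge H\uZ$ by an $H\KK_p\bZp$- or $H\KK_q\bZq$-summand. To show that the extension is trivial and the slice remains spherical, one computes $\underline{\pi}_{\dim V}(P^{\dim V}(S^V \wedge H\uZ))$ directly via Ullman's algebraic filtration (Proposition \ref{algfil}) and matches it with $\underline{\pi}_{\dim V}(S^W \wedge H\uZ)$, using the geometric-fixed-point criterion of Theorem \ref{connective} together with the triviality of $\Phi^{C_{pq}}$ on the $H\uZ$-module in question. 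This case-check is the only step that is not formal; the rest of the argument applies uniformly to all four cases (i)--(iv).
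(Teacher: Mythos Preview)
Your approach is essentially the same as the paper's: in each case you construct $W=V+l(\xi-\xi^p)+k(\xi-\xi^q)$ with $\dim W=\dim V$, verify via Lemma~\ref{cpslice} and Proposition~\ref{res} that $S^W\wedge H\uZ$ is a $C_{pq}$-$\dim(V)$-slice, and compare $S^V$ to $S^W$ through maps whose cofibers are wedges of suspensions of $H\KK_p\bZp$ or $H\KK_q\bZq$. The paper does exactly this, except that in cases (i) and (ii) it uses only the $u_{\xi-\xi^p}$, $u_{\xi-\xi^q}$ maps of Lemma~\ref{HK} (in the appropriate direction) rather than the degree-$p$ maps $a_{\xi^p}/a_\xi$ you invoke for negative shifts; since Proposition~\ref{cofiber} is stated only for the $u$-maps, the paper's choice avoids having to redo that cofiber analysis, whereas you correctly note that the same restriction-vanishing argument carries over.

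You go further than the paper in one respect: you isolate the extension problem that arises when $\dim V$ is a multiple of $p$ or $q$, so that the cofibers in the zig-zag may themselves contribute a slice in degree $\dim V$. The paper's proof simply asserts that the cofiber ``has slices with filtration $<\dim(V)$'' without addressing this overlap. Your proposed resolution via Proposition~\ref{algfil} and Theorem~\ref{connective} is plausible but only sketched; in the paper this point is left implicit. So your argument is at least as complete as the paper's, and arguably more honest about where the remaining work lies.
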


\begin{proof}

Assume that the $C_{pq}$-representation is of the form $a+b\xi+c\xi^q +d\xi^q$ such that $a, b, c$ and $d$ are non-negative integers. Now consider the $C_p$-spectrum $res^G_{C_p}(S^V \wedge H \uZ)$ and the $C_q$-spectrum $res^G_{C_q}(S^V \wedge H \uZ).$

For case (i), the hypothesis immediately gives 

$$a+2c \leq dim(V)/p \text{ and } a+2d \leq dim(V)/q.$$ Therefore, from \cite{Yar17}, there exists $C_p$ and $C_q$-representations $m+n \xi$ and $m^\prime + n^\prime \xi$ respectively such that $S^{m+n \xi}\wedge H\uZ$ is a $dim(V)$-slice of  $res^G_{C_p}(S^V \wedge H \uZ)$ and $S^{m^\prime +n^\prime \xi}\wedge H\uZ$ is a $dim(V)$-slice of $res^G_{C_q}(S^V \wedge H \uZ)$.
 
 Next, we consider the representation 
 
 $$W = V -(b+d-n)(\xi-\xi^p) - (b+c - n^\prime)(\xi-\xi^q).$$ 
 
 Then, it is readily follows that the spectrum $res^{G}_H S^W \wedge H\uZ$ is a $H$-$dim(V)$-slice for both $H= C_p$ and $C_q.$ Hence, Proposition \ref{res} implies that $S^W \wedge H\uZ$ is a $dim(V)$-slice for the group $C_{pq}.$ Also, note that the restrictions of the cofiber of the composition 
 $$u_{(b+d-n)(\xi-\xi^p)} \circ u_{(b+c - n^\prime)(\xi-\xi^q)}: S^W \wedge H\uZ \to S^V \wedge H\uZ$$
  has slices with filtration $< dim(V).$
  
  For case (ii), we have a similar argument: consider the map 
  $$ u_{(b+d-n)(\xi-\xi^p)} \circ u_{(b+c - n^\prime)(\xi-\xi^q)}: S^V \wedge H\uZ \to S^{V+(b+d-n)(\xi-\xi^p) +(b+c - n^\prime)(\xi-\xi^q)} \wedge H\uZ.$$
  The restriction of its fiber to the subgroups $C_p$ and $C_q$ has slices less than $dim(V).$ Also, the spectrum $ S^{V+(b+d-n)(\xi-\xi^p) +(b+c - n^\prime)(\xi-\xi^q)} \wedge H\uZ$ is a $C_{pq}$-$dim(V)$-slice for $S^V \wedge H\uZ.$ 
  
  Finally, we consider case (iii). By hypothesis  $res^G_{C_p}(S^V \wedge H\uZ) \geq dim(V).$ Choose $l = \lceil \frac{(a+2c)(p-1)+2(b+d)-3p}{2p} \rceil.$ Then the spectrum $res^{G}_{C_p}(S^{V+l (\xi-\xi^p)}\wedge H\uZ) \leq dim(V).$ Also, there is a map 
  
  $$u_{l(\xi-\xi^p)}:S^V \to S^{V+ l (\xi-\xi^p)}\wedge H\uZ$$ whose fiber is a  wedge of trivial suspensions of $H \KK_p\bZp$, $H\KK_q\bZq$, or both and they are  $\geq dim(V).$ The spectrum $S^{V+l(\xi-\xi^p)}\leq dim(V)$ as its restriction to the subgroups $C_p$ and $C_q$ are $\leq dim(V)$. So, it has slices if the slice filtration is less equal to $dim(V).$ Hence, we get the slice tower for $S^V \wedge H\uZ.$

\end{proof}

\begin{thm}\label{main2}
For $\alpha \in RO(C_{pq})$  there exists $\beta \in RO(C_{pq})$ such that  $S^\alpha \wedge H\uZ$ has a $dim(\alpha)$-slice $S^\beta \wedge H\uZ.$ The other slices are suspensions of $H \KK_p \bZp$ or $H\KK_q \bZq$ or wedges of them.
\end{thm}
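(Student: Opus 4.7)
The plan is to assemble the pieces already developed in this section. Given $\alpha \in RO(C_{pq})$, I first use the equivalences in Lemma~\ref{HK} to rewrite $S^\alpha \wedge H\uZ$ as $S^{\alpha'} \wedge H\uZ$ with $\alpha' = a + b\xi + c\xi^p + d\xi^q$ for integers $a,b,c,d$. To handle possibly negative $b, c, d$, I invoke Proposition~\ref{regular}: if $X$ is an $m$-slice then $\Sigma^{k\rho_G} X$ is an $(m + k \cdot pq)$-slice. Hence smashing with $S^{k\rho_G}$ for $k \gg 0$ reduces to the case $b, c, d \geq 0$, and the slice tower of the original $S^\alpha \wedge H\uZ$ is recovered by desuspending by $k\rho_G$, shifting each slice by $pq \cdot k$ in the other direction.

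In this non-negative case, Proposition~\ref{slice} gives the spherical $dim(\alpha)$-slice directly. Depending on how the restrictions of $S^{\alpha'} \wedge H\uZ$ to $C_p$ and to $C_q$ compare to $dim(\alpha)$ in the respective prime-order slice filtrations (which is controlled by Lemma~\ref{cpslice}), exactly one of the four cases (i)--(iv) of Proposition~\ref{slice} applies, and each case constructs a spherical $dim(\alpha)$-slice $S^\beta \wedge H\uZ$ by adjusting $\alpha'$ via iterated applications of the comparison maps $u_{l(\xi - \xi^p)}$ and $u_{l(\xi - \xi^q)}$ of Lemma~\ref{HK}. This already produces the $\beta \in RO(C_{pq})$ claimed in the statement.

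The remaining slices are the fibers and cofibers of these comparison maps. The key observation, already used in Proposition~\ref{cofiber}, is that the cofiber of $u_{l(\xi - \xi^p)}$ restricts trivially to $C_q$, so by Proposition~\ref{cohmac} (applied to its cohomological $H\uZ$-module structure) it decomposes as a wedge of suspensions of $H\KK_p\bZp$; symmetrically, the cofiber of $u_{l(\xi - \xi^q)}$ is a wedge of suspensions of $H\KK_q\bZq$. Lemma~\ref{Kpslice} then identifies each $\Sigma^n H\KK_p\bZp$ as a single $np$-slice and, by the same argument swapping primes, each $\Sigma^n H\KK_q\bZq$ as a single $nq$-slice. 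So each wedge summand contributes exactly one non-spherical slice at a prescribed filtration level strictly below $dim(\alpha)$.

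The main obstacle is the combinatorial bookkeeping when both $p$- and $q$-adjustments occur together, as in case (i) of Proposition~\ref{slice}: one must decompose the iterated (co)fiber of $u_{(b+d-n)(\xi - \xi^p)} \circ u_{(b+c-n')(\xi - \xi^q)}$ coherently into the two prescribed types without cross terms. Proposition~\ref{cohmac} rules out such interactions, since any $C_{pq}$-cohomological Mackey functor that vanishes on both $G/C_p$ and $G/C_q$ must be zero; hence each homotopy Mackey functor of the intermediate (co)fiber splits cleanly as $\KK_p\bZp \oplus \KK_q\bZq$, and the two summands propagate to give independent slice contributions. Once this splitting is in place, collecting the spherical top slice from Proposition~\ref{slice} with these lower slices completes the identification and establishes the theorem.
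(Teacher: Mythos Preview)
Your proposal follows essentially the same route as the paper's proof: reduce via Proposition~\ref{regular} to an honest representation and then invoke Propositions~\ref{cofiber} and~\ref{slice}. One small inaccuracy worth flagging: the non-spherical slices need not lie \emph{strictly below} $\dim(\alpha)$---in cases (ii)--(iv) of Proposition~\ref{slice} the (co)fibers of the $u$-maps contribute slices \emph{above} $\dim(\alpha)$ as well (compare the $S^{11\xi^5}$ example, where slices appear at $95,85,\ldots,25$ as well as at $0$)---but this does not affect the overall argument or the conclusion.
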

\begin{proof}
We can always find some $k \in \Z$ such that $\alpha + k \rho_{G}$ is honest representation  of $G$. Therefore, using Proposition \ref{regular} it is enough to consider $\alpha =V= a+b\xi+c\xi^p +d \xi^q$ for $a, b,c, \text{and }d$ are non-negative integers. Then the Propositions \ref{cofiber} and \ref{slice} together imply the result.
\end{proof}

Next, we consider few examples for the group $G=C_{15}$, so, $p=3$ and $q=5.$

\begin{exam}
We'll give a slice tower for the $C_{15}$-spectrum $X = S^{11\xi^5}\wedge H\uZ.$ Note that $Res^{G}_{C_3}(X) \cong S^{11\xi} \wedge H\uZ$ and $Res^{G}_{C_5}(X) \cong S^{22} \wedge H\uZ.$ Therefore, we are in case where $Res^{G}_{C_3}(X) \leq 22$ and $Res^{G}_{C_5}(X) \geq 22.$

Choose $l$ to be the lease positive integer such that the spectrum $res^G_{C_5}S^{l(\xi-\xi^5})\wedge X \leq 22.$ Then, $l$ should be equal to $\lceil \frac{22.(5-1)- 3.5}{2.5} \rceil  =8.$ Therefore, until   the $22$-slice all the higher dimension slices are obtain by the computations of the fiber of the map $u_{\xi-\xi^q}.$ Therefore, iterative use of the Proposition \ref{slice} we obtain the slice tower as follows:

$$\xymatrix{95\text{-slice:}& \Sigma^{19} H\KK_q\bZq \ar[r] & S^{11\xi^q} \wedge H\uZ \ar[d]^{u_{\xi-\xi^q}} \\
85\text{-slice:}& \Sigma^{17} H\KK_q\bZq \ar[r] & S^{\xi +10\xi^q} \wedge H\uZ \ar[d]^{u_{\xi-\xi^q}} \\
75\text{-slice:}&\Sigma^{15} H\KK_q\bZq \ar[r] & S^{2\xi + 9 \xi^q} \wedge H\uZ \ar[d]^{u_{\xi-\xi^q}}\\ 
65\text{-slice:}&\Sigma^{13} H\KK_q\bZq \ar[r] & S^{3\xi + 8\xi^q} \wedge H\uZ \ar[d]^{u_{\xi-\xi^q}} \\ 
55\text{-slice:}& \Sigma^{11} H\KK_q\bZq \ar[r] & S^{4\xi + 7\xi^q} \wedge H\uZ \ar[d]^{u_{\xi-\xi^q}}\\
45\text{-slice:}& \Sigma^{9} H\KK_q\bZq \ar[r] & S^{5\xi + 6\xi^q} \wedge H\uZ \ar[d]^{u_{\xi-\xi^q}}\\
35\text{-slice:}& \Sigma^{7} H\KK_q\bZq \ar[r] & S^{6\xi +5\xi^q} \wedge H\uZ \ar[d]^{u_{\xi-\xi^q}}\\
25\text{-slice:}& \Sigma^{5} H\KK_q\bZq \ar[r] & S^{7\xi+4\xi^q} \wedge H\uZ \ar[d]^{u_{\xi-\xi^q}}\\
22 \text{-slice:}& S^{7\xi+\xi^p +3\xi^q}\wedge H\uZ \ar[r] & S^{8\xi+3\xi^q} \wedge H\uZ \ar[d]\\ 0 \text{-slice:}&
& H\KK_p \bZp}$$
\end{exam}

\begin{exam}First we construct the slice tower for the spectrum $S^6 \wedge H\uZ.$ The restrictions satisfy 
$$res^G_{C_p} (S^6 \wedge H\uZ) \geq 6$$
and
$$res^G_{C_q} (S^6 \wedge H\uZ) \geq 6.$$

Therefore, we are case (ii) of the Proposition \ref{slice}. Using the technique mentioned in the proof we get the following slice tower:
$$\xymatrix{15-slice: & \Sigma^3 \KK_q\bZq \ar[r] & S^6 \wedge H\uZ \ar[d]^{u_{\xi-\xi^q}} \\
9-slice: & \Sigma^3 \KK_p\bZp \ar[r] & S^{6+\xi-\xi^q}\wedge H\uZ \ar[d]^{u_{\xi-\xi^p}} \\ 6-slice: & & S^{6+2\xi-\xi^p-\xi^q}\wedge H\uZ }$$
\end{exam}

\bibliographystyle{siam}
\bibliography{algtop}{}

\mbox{ }\\
\end{document}